\newcommand{\vectornorm}[1]{\left|\left|#1\right|\right|}
\theoremstyle{break}
\newtheorem{Lemma}{Lemma}[section]
\newtheorem{Proposition}[Lemma]{Proposition}
\newtheorem{defn}{Definition}[section]
\newtheorem{Remark}{Remark}[section]
\begin{document}

\title{Passivity-preserving splitting methods for rigid body systems}
\author{Elena Celledoni\corrauth, Eirik Hoel H{\o}iseth\ and Nataliya Ramzina}
\author{Elena Celledoni, \and
Eirik Hoel H{\o}iseth \and 
Nataliya Ramzina}

\institute{ \at
              Department of Mathematical Sciences, NTNU, 7491 Trondheim, Norway \\
          %    Tel.: +123-45-678910\\
            %  Fax: +123-45-678910\\
              \email{elena.celledoni@ntnu.no} 
              \email{eirik.hoiseth@math.ntnu.no}           %  \\
%             \emph{Present address:} of F. Author  %  if needed
%           \and
%           S. Author \at
 %             second address
}

%\address{Department of Mathematical Sciences, NTNU, 7491 Trondheim, Norway.}

%\corraddr{Elena Celledoni, Department of Mathematical Sciences, NTNU, 7491 Trondheim, Norway.\\ E-mail: elena.celledoni@ntnu.no}
\maketitle

\begin{abstract}
A rigid body model for the dynamics of a marine vessel, used in simulations of offshore pipe-lay operations, gives rise to a set of ordinary differential equations with controls. 
The system is input-output passive. We propose passivity-preserving splitting methods for the numerical solution of a class of problems which includes this system as a special case. 
We prove the passivity-preservation property for the splitting methods, and we investigate stability and energy behaviour in numerical experiments. Implementation is discussed in detail for a special case where the splitting gives rise to the subsequent integration of two completely integrable flows. The equations for the attitude are reformulated on $SO(3)$ using rotation matrices rather than local parametrizations with Euler angles. 
\keywords{passivity; structure preservation; differential equations; time integration; multibody dynamics}
\end{abstract}

%\maketitle

%% additional definitions:
%\def\Z3x3{0{\ensuremath{_{3\times3}}}}

%%%%%%%%%%%%%%%%%%%%%%%%%%
%%    INPUT CHAPTERS    %%
%%%%%%%%%%%%%%%%%%%%%%%%%%

\section{Introduction}

In this paper we propose passivity preserving splitting methods for the control of input-output passive rigid body systems, and in particular for a model of a marine vessel. The preservation of passivity under numerical discretization is not well known  in the literature. We here propose a simple and general technique to achieve passivity-preservation under numerical discretization when using splitting methods. This technique is applicable to a large class of input-output passive systems, of which the vessel model is a special example.

%In this paper we apply the methods to a system of ordinary differential equations  (ODEs) with controls arising as a model of a marine vessel. 
The control of vessel rigid body equations is important in the simulation of a number of offshore marine operations \cite{marinecontrolsystems}. One example is the simulation of pipeline installations sketched in Figure~\ref{fig2}. %where an interaction of a vessel and a thin flexible pipe attached to it is demonstrated. 
%This marine operation requires the use of accurate models solved with reliable numerical methods.
\begin{figure}
\centering
\includegraphics[width=11cm]{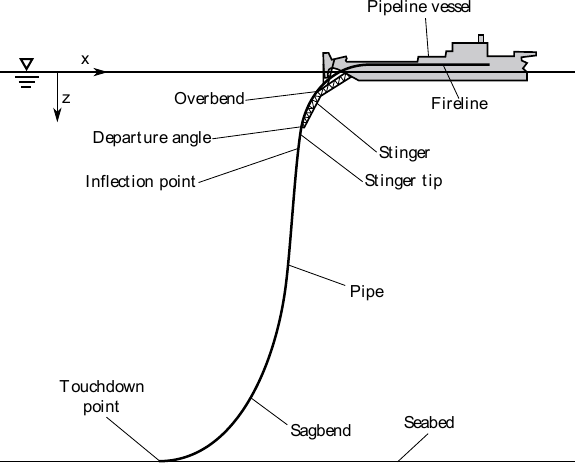}
\caption{The offshore pipe-lay process. The purpose build pipeline vessel uses heavy tension equipment to clamp the pipe on to it. The pipe is then extended in a production line. The two main pipelay methods are the dominant S-lay (shown) and J-lay. In the S-lay method, the pipe is extended horizontally. The name S-lay comes from the S-shape of the pipe from vessel to seabed. A submerged supporting structure, called a stinger, controls curvature and ovalization in the upper part (overbend). Pipe tension controls the curvature in the lower curve (sagbend). The strain must be checked to stay within limits for buckling and ovalization. See \cite{jensen09mac} and the references therein for more details.\label{fig2}}%In deep waters, the increased pipe weight, and consequent tension, makes it difficult to maintain a stinger supported overbend. The J-lay method is therefore preferred. Here the pipe is extended near vertically, forming a J-shaped curve. This eliminates the overbend. See \cite{jensen09mac} and the references therein for more details.}
\end{figure}
The pipe-laying process comprises the modeling of two interacting structures, a vessel and a pipe. It is usual to model the vessel as a torqued rigid body with control, and the long and thin pipe as a rod. %Environmental forces such as sea and wind effects must in general be accounted for in the model. For this problem t
The parameters to control are the vessel position and velocity, the pay-out speed, and the pipe tension. The control objectives are to determine the position of the touchdown point of the pipe, and to avoid critical deformations and structural failures  \cite{jensen09mac}, \cite{jensen10anp}. The design of damping forces and controls must ensure that the resulting system is input-output passive, i.e. an energy conservation/dissipation property must be satisfied.  

The full pipe-lay problem has already been simulated in \cite{jensen10anp}, using local parametrisations based on Euler angles for both the pipe and the vessel model and with standard numerical techniques. The focus in this paper is %on developing efficient and robust solvers for 
the the passivity preserving integration of the vessel rigid-body model. %, since this is the only part of the vessel-rod multibody system which can be controlled, and also since the boundary conditions of the rod come from the vessel rigid body equations. 
%The numerical approach we here pursue is aimed at respecting the properties of passivity of the control system, and to respect the manifold structure of the equations, the attitude rotation evolves on the Lie group $SO(3)$.

We consider the six degrees of freedom model for the dynamics of a
marine vessel given in  \cite{jensen10anp} (see also \cite{marinecontrolsystems} and \cite{perez07kmf}), which expressed in matrix-vector form
is
\begin{align}
\dot{\mbox{\boldmath $\eta$}}&=J(\mbox{\boldmath $\eta$})\mbox{\boldmath $\nu$}, \nonumber \\
M\dot{\mbox{\boldmath $\nu$}} + C(\mbox{\boldmath $\nu$})\mbox{\boldmath $\nu$}
 + D(\mbox{\boldmath $\nu$})\mbox{\boldmath $\nu$} + \mathbf{g}(\mbox{\boldmath $\eta$}) &=
\mbox{\boldmath $\tau$} +\mathbf{\mathcal{X}}+ \mathbf{w}, \label{eq:vesseleq}
\end{align}

\noindent where $\mbox{\boldmath $\eta$}$ and $\mbox{\boldmath $\nu$}$ are  generalised position and velocity respectively, $M$ is the system inertia matrix, $C(\mbox{\boldmath $\nu$})$ the skew-symmetric Coriolis-centripetal matrix, $D(\mbox{\boldmath $\nu$})$ the symmetric damping matrix, $\mathbf{g}(\mbox{\boldmath $\eta$})$ the vector of gravitational and buoyancy forces and moments, $\mbox{\boldmath $\tau$}$ the vector of control inputs, $\mathbf{\mathcal{X}}$ the forces and moments from the pipe, and $\mathbf{w}$ environmental disturbances such as wind, waves and currents,  see section~\ref{model} and \ref{controls} for details.  The position variables $\mbox{\boldmath $\eta$}$ include the three Euler angles representing the attitude of the body and evolving on the Lie group $SO(3)$. We reformulate the equations using rotation matrices, and provide numerical approximations that preserve the structure of this manifold.

%We consider a general setting for input-output passive systems in which controlled rigid body systems as the
This vessel model \eqref{eq:vesseleq} is a special example of an input-output passive systems, and in particular an input-state-output port-Hamiltonian system, \cite{VanderSchaft06pHs}. The passivity of the vessel system follows directly from the passivity of port-Hamiltonian systems (see Section~\ref{passivity}).
%The vessel  ODE system is input-output passive as can be easily shown by writing the vector field of the system using a skew-symmetric matrix which multiplies the gradient of a Hamiltonian function, to which are added dissipative forces and controls (see section~\ref{passivity}). 
Our proposed numerical integration methods are splitting methods, a class of integration methods very well known in the geometric numerical integration literature \cite{gni06hairer}. % because of their structure preserving properties, but less popular in the multi-body dynamics community. 
We propose to split systems of the type \eqref{eq:vesseleq} into a conservative part, and a part comprising torque, dissipative forces and controls. 
We prove passivity of the proposed splitting methods for the general class of input-state-output port-Hamiltonian systems.
For a simplified model, where  %the contribute of the linear momentum to the kinetic energy is
both control and damping are linear with constant coefficients and the mass matrix has a special structure, the two individual flows are completely integrable, and we discuss the implementation details of their exact solution.
%the remaining part of the equations can be easily solved exactly by the variation of constants formula. For the case when the remaining part of the system is a nonlinear vector field (due to quadratic damping terms and when allowing for completely general added mass) one can solve this part of the splitting by an energy-preserving/energy-dissipative integrator \cite{mclachlanDG} and still ensure passivity of the discrete system. %an exponential integrator is proposed for this part of the equations, see appendix.

In the presence of PID (proportional-integral-derivative) controls, $\mbox{\boldmath $\tau$}$ depends on $\mbox{\boldmath $\nu$} $, on $\mbox{\boldmath $\eta$}$ and on the integral of $\mbox{\boldmath $\eta$}$ over time. 
We include this integral as an extra unknown to the system to avoid explicit time dependence in the system of equations. %We adopt optimised splitting techniques devised by Blanes et al. \cite{blanes}, to obtain high order. The result is an efficient method.
%We also give a proof of passivity of the resulting system of equations.

%The splitting methods must be appropriately adapted to this situation. We approach this in the simplest possible way by treating time as a new variable, and adding this new variable to the system.

Finally, we make a numerical study of the order and 
% stability behaviour of the splitting methods depending on the step-size. T
the energy behaviour of the methods. The results are compared with those given by explicit Runge-Kutta methods. %The splitting methods here proposed are robust, and show improved energy and global error behaviour compared to explicit Runge-Kutta methods of the same order. %, especially for long time simulations (with small damping) and large step-sizes.
% fully discrete model, regarding the preservation of total energy of the system. %input-output passivity property of the original system.

The outline of the paper is as follows: 
in Section 2 we remark on the passivity of this system, describe the details of the splitting and composition techniques and show their input-output passivity; in Section 3 we describe the vessel equations and describe the implementation details; in Section 4 we report our numerical results illustrating the performance of the methods; Section 5 is devoted to conclusions.

\section{Energy conservation and passivity } % a.k.a. Passivity
\label{passivity}

In the present section we consider the problem of conservation of the total energy. 
%Starting from the formal definition of the law of energy conservation for isolated (closed-loop) systems, one can write the Hamiltonian $H = H(t)$ at a %given time $t$ as
%\begin{equation}
%H = H_0 + A_{ext} %\Big|_0^t
%\label{eq:total_energy}
%\end{equation}
%~\ref{eq:total_energy}
%where $A_{ext}$ is the work of external forces on the vessel.
In terms of control dynamics, the systems of interest are frequently considered as nearly isolated, and the energy conservation law is formulated in terms of the \textit{passivity} of the system.  We will use %will show that our system satisfies 
the following definition of passivity.
\begin{defn}
\label{defpass} \cite{egeland} (Chapter 2)
\newline
Consider a model 
\begin{align*}
\dot {\mathbf{y}}  &= \mathbf{f}(\mathbf{y}, \, \mathbf{u}), \\
\boldsymbol{\zeta} &=\mathbf{h}(\mathbf{y}).
\end{align*}
Suppose that there is a \emph{storage function} $V(\mathbf{y})\geq 0$ and a dissipation function $g(\mathbf{y}) \geq 0$ so that the time derivative of $V$ for a solution of the system satisfies 
\begin{equation*}
\dot{V}=\nabla V(\mathbf{y})^T\mathbf{f}(\mathbf{y}, \, \mathbf{u})= \mathbf{u}^T \boldsymbol{\zeta}-g(\mathbf{y}),
\end{equation*} 
 for all control inputs $\mathbf{u}$. Then the system with input $\mathbf{u}$ and output $\boldsymbol{\zeta}$ is said to be (input-output) passive. 
\end{defn}

Notice that the passivity condition is equivalent to
\begin{equation}
\label{passivity1}
V(\mathbf{y}(t))-V(\mathbf{y}(0))=\int_0^t\left(\mathbf{u}(s)^T \boldsymbol{\zeta}(s)-g(\mathbf{y}(s))\right)\,ds.
\end{equation}

Let $H:\mathbf{R}^m\rightarrow\mathbf{R}$ be a total energy function, $\mbox{\boldmath $\xi$}(t)\in\mathbf{R}^m$, $\mbox{\boldmath $\tau$}(t)\in\mathbf{R}^p$, $S(\mbox{\boldmath $\xi$})\in \mathbf{R}^{m\times m}$ skew-symmetric,  $G(\mbox{\boldmath $\xi$})\in \mathbf{R}^{m\times p}$, $D(\mbox{\boldmath $\xi$})\in \mathbf{R}^{p\times p}$ positive definite.
Consider the system 
%The system \eqref{eq:vessel} can be written in the form
\begin{eqnarray}
\label{eq:xiDiff}
\dot{\mbox{\boldmath $\xi$}} &=& S(\mbox{\boldmath $\xi$})\nabla H(\mbox{\boldmath $\xi$}) -G\left(D(\mbox{\boldmath $\xi$})\mbox{\boldmath $\nu$} - \mbox{\boldmath $\tau$}\right),\\
\mbox{\boldmath $\nu$}&=&G^T\nabla H(\mbox{\boldmath $\xi$})
\end{eqnarray}
also known as an input-state-output port-Hamiltonian system with input $\mbox{\boldmath $\tau$}$ and output $\mbox{\boldmath $\nu$}$, \cite{VanderSchaft06pHs}.
%with %$\mbox{\boldmath $\nu$}:=G^T\nabla H(\mbox{\boldmath $\xi$})$, 
%$D(\mbox{\boldmath $\xi$})$ positive definite and $S(\mbox{\boldmath $\xi$})$ skew-symmetric
%\eqref{eq:xiDiff} 
\begin{Proposition}\label{passivityPROP} The system \eqref{eq:xiDiff}  
is input-output passive with input $\mbox{\boldmath $\tau$}$ and output $\mbox{\boldmath $\nu$}$.
\end{Proposition}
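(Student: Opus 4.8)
The plan is to use the Hamiltonian $H$ itself as the storage function $V$ in Definition~\ref{defpass}, to take $g(\mbox{\boldmath $\xi$}) := \mbox{\boldmath $\nu$}^T D\mbox{\boldmath $\nu$}$ as the dissipation function, and then to verify the passivity relation directly by differentiating $H$ along the flow of the port-Hamiltonian form \eqref{eq:xiDiff}. Input and output will be $\mathbf{u}=\mbox{\boldmath $\tau$}$ and $\mbox{\boldmath $\zeta$}=\mbox{\boldmath $\nu$}$.

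First I would check that $V:=H\geq 0$. The kinetic part $K=\tfrac12\mathbf{m}^TT^{-1}\mathbf{m}+\tfrac12 m_v^{-1}\mathbf{p}^T\mathbf{p}$ is a sum of quadratic forms associated with the positive definite matrices $T^{-1}$ and $m_v^{-1}\I3x3$, hence $K\geq 0$. For the potential part, $c=g\rho_wA_{wp}>0$ and $G=m_vg\,\mathrm{diag}(\overline{GM}_L,\overline{GM}_T,0)$ is positive semidefinite whenever the metacentric heights are nonnegative, which holds for any hydrostatically stable vessel; so $U=\tfrac12\mbox{\boldmath $\mu$}^TG\mbox{\boldmath $\mu$}+\tfrac12 c\bar z^2\geq 0$ and therefore $H=K+U\geq 0$.

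Next I would differentiate $H$ along a solution of \eqref{eq:xiDiff}. Using Proposition~\ref{portHamSys} and the chain rule,
\[
\dot H=\nabla H(\mbox{\boldmath $\xi$})^T\dot{\mbox{\boldmath $\xi$}}=\nabla H^TS(\mbox{\boldmath $\xi$})\nabla H-\nabla H^T\tilde I\,(D\mbox{\boldmath $\nu$}-\mbox{\boldmath $\tau$}).
\]
The matrix $S(\mbox{\boldmath $\xi$})$ in \eqref{eq:skewMat} is skew-symmetric for every $\mbox{\boldmath $\xi$}$: the diagonal blocks $\widehat{\mathbf{m}}$ and $-m_v\widehat{T^{-1}\mathbf{m}}$ are hat-matrices and hence skew, while the off-diagonal blocks pair up consistently, since $\widehat{\mbox{\boldmath $\mu$}}^T=-\widehat{\mbox{\boldmath $\mu$}}$ makes the $(1,3)$ and $(3,1)$ blocks mutual transpose-negatives, and the $(2,4)$ block $-\mbox{\boldmath $\mu$}$ is the transpose-negative of the $(4,2)$ block $\mbox{\boldmath $\mu$}^T$. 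Consequently $\nabla H^TS(\mbox{\boldmath $\xi$})\nabla H=0$. From the expression for $\nabla H$ in the proof of Proposition~\ref{portHamSys} and the form of $\tilde I$, projecting onto the first six components gives $\tilde I^T\nabla H(\mbox{\boldmath $\xi$})=[(T^{-1}\mathbf{m})^T,(m_v^{-1}\mathbf{p})^T]^T=[\mbox{\boldmath $\omega$}^T,\mathbf{v}^T]^T=\mbox{\boldmath $\nu$}$ (with the linear and angular components ordered consistently with $\mbox{\boldmath $\xi$}$). Hence
\[
\dot H=-\mbox{\boldmath $\nu$}^T(D\mbox{\boldmath $\nu$}-\mbox{\boldmath $\tau$})=\mbox{\boldmath $\tau$}^T\mbox{\boldmath $\nu$}-\mbox{\boldmath $\nu$}^TD\mbox{\boldmath $\nu$},
\]
and since $D$ is positive definite, $g(\mbox{\boldmath $\xi$})=\mbox{\boldmath $\nu$}^TD\mbox{\boldmath $\nu$}\geq 0$. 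With $V=H$, $\mathbf{u}=\mbox{\boldmath $\tau$}$ and $\mbox{\boldmath $\zeta$}=\mbox{\boldmath $\nu$}$, this is exactly the relation required by Definition~\ref{defpass}, so the system is input--output passive.

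All the computations are elementary once Proposition~\ref{portHamSys} is available. The step that deserves the most care is the skew-symmetry bookkeeping for $S(\mbox{\boldmath $\xi$})$, as this is precisely what makes the conservative contribution vanish in $\dot H$; a secondary (but mild) point is the hydrostatic-stability assumption guaranteeing $G\succeq 0$, which is what is actually needed for $H\geq 0$ rather than merely $H$ bounded below.
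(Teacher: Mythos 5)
Your proof is correct and follows essentially the same route as the paper: take $V=H$ as the storage function, differentiate along the flow of \eqref{eq:xiDiff}, use the skew-symmetry of $S(\mbox{\boldmath $\xi$})$ to annihilate the conservative term, and identify $g(\mbox{\boldmath $\xi$})=\mbox{\boldmath $\nu$}^TD\mbox{\boldmath $\nu$}\geq 0$ from the positive definiteness of $D$. The only additions are your explicit blockwise verification of skew-symmetry and the check that $H\geq 0$ (which the paper leaves implicit), both of which are fine.
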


\begin{proof}
Differentiating $H$ with respect to time and using \eqref{eq:xiDiff} we find
\begin{equation}
\dot{H} = \nabla H(\mbox{\boldmath $\xi$})^T\dot{\mbox{\boldmath $\xi$}} = \nabla H(\mbox{\boldmath $\xi$})^T S(\mbox{\boldmath $\xi$})\nabla H(\mbox{\boldmath $\xi$}) +  \mbox{\boldmath $\nu$}^T (-D \mbox{\boldmath $\nu$} + \mbox{\boldmath $\tau$}) = \mbox{\boldmath $\nu$}^T (-D \mbox{\boldmath $\nu$} + \mbox{\boldmath $\tau$}),
\label{eq:HamDiff}
\end{equation}
where the last equality in \eqref{eq:HamDiff} follows from the skew symmetry of $S$.

If we choose $V = H$, $\mathbf{y} = \mbox{\boldmath $\xi$}$, $\mathbf{u}=\mbox{\boldmath $\tau$}$ and $\boldsymbol{\zeta}=\mbox{\boldmath $\nu$}$, then it follows from \eqref{eq:HamDiff} that the system \eqref{eq:xiDiff} satisfies Definition \ref{defpass} with  $g(\mathbf{y})=\mbox{\boldmath $\nu$}^T \, D \mbox{\boldmath $\nu$}$ and $\mathbf{h}(\mathbf{y}) =G^T\nabla H(\mbox{\boldmath $\xi$})$. Thus the system is passive. The requirement $g(\mathbf{y}) \geq 0$ is fulfilled because the matrix $D$ is positive-definite. 
\end{proof}

\subsection{Splitting methods}
\label{numerical}
Consider a general system of first order ODEs where the right hand side vector field admits the splitting

\begin{equation}
\label{eq:split}
\dot{\mathbf{y}} = \mathbf{S_1}(\mathbf{y}) + \mathbf{S_2}(\mathbf{y}).
\end{equation}

We generate a numerical approximation of \eqref{eq:split} by composing the exact flows of $\mathbf{S_1}$ and $\mathbf{S_2}$, computed on time intervals of suitable length, with suitable initial values. Let $\boldsymbol{\Phi}_{h}^{[\mathbf{S_1}]}$ and $\boldsymbol{\Phi}_{h}^{[\mathbf{S_2}]}$ denote, respectively, the exact flow maps of $\mathbf{S_1}$ and $\mathbf{S_2}$. The Lie-Trotter splitting, giving approximations of order $1$, is
\begin{equation}\label{eq:lt}
\mathbf{y}^{j+1} = \boldsymbol{\Phi}_{h}^{[\mathbf{S_2}]}\circ \boldsymbol{\Phi}_{h}^{[\mathbf{S_1}]}
\left(\mathbf{y}^{j}\right),\quad \mathrm{or}\quad \mathbf{y}^{j+1} =\boldsymbol{\Phi}_{h}^{[\mathbf{S_1}]}\circ \boldsymbol{\Phi}_{h}^{[\mathbf{S_2}]} \,
\left(\mathbf{y}^{j}\right).
\end{equation}
An approximation of order 2 is given by the classical Strang splitting scheme as
\begin{equation}\label{eq:s-v0}
\mathbf{y}^{j+1} = \boldsymbol{\Phi}_{h/2}^{[\mathbf{S_1}]}\circ \boldsymbol{\Phi}_{h}^{[\mathbf{S_2}]} \circ\boldsymbol{\Phi}_{h/2}^{[\mathbf{S_1}]}\,
\left(\mathbf{y}^{j}\right),
\end{equation}
or alternatively
\begin{equation}\label{eq:s-v}
\mathbf{y}^{j+1} = \boldsymbol{\Phi}_{h/2}^{[\mathbf{S_2}]}\circ \boldsymbol{\Phi}_{h}^{[\mathbf{S_1}]} \circ\boldsymbol{\Phi}_{h/2}^{[\mathbf{S_2}]}\,
\left(\mathbf{y}^{j}\right).
\end{equation}
%This scheme is known to be symplectic and to
%give a method of order 2.
The two flows $\mathbf{S_1}$ and $\mathbf{S_2}$ can also be combined to obtain splitting
methods of higher order. Specifically, symmetric splitting schemes of the
type %(see \cite{exactcomp})
\begin{equation}
\label{splittinggeneral}
\mathbf{y}^{j+1} = \boldsymbol{\Phi}_{a_1h}^{[\mathbf{S_2}]}\circ \boldsymbol{\Phi}_{b_1h}^{[\mathbf{S_1}]} \circ\boldsymbol{\Phi}_{a_2h}^{[\mathbf{S_2}]}
\circ \cdots \circ \boldsymbol{\Phi}_{a_{m+1}h}^{[\mathbf{S_2}]} \circ \cdots \circ \boldsymbol{\Phi}_{b_1h}^{[\mathbf{S_1}]} \circ \boldsymbol{\Phi}_{a_1h}^{[\mathbf{S_2}]}\,
\left(\mathbf{y}^{j}\right),
\end{equation}
are considered. These are a generalization of the scheme \eqref{eq:s-v}.
%We reproduce t
The Strang splitting can be reproduced from the general formula \eqref{splittinggeneral} by choosing $m=1$, $a_1 = b_1 = 1/2$ and $a_2 = 0$.
%(in the way
%we have presented the scheme above, it is easier to recognize the Strang splitting scheme if
%the coefficients are set to $a_1 = 1/2$, $a_2=0$, and $b_1 = 1/2$).
The coefficients of a $4$th and a $6$th order scheme are reported in Appendix \ref{SplittingCoef}.

% is obtained with the coefficients:
%%Let us first give the coefficients to the fourth order method:
%
%\begin{center}
%\begin{tabular}{lcl}
%$a_1 = 0.0792036964311957$ && $b_1 = 0.209515106613362$ \\
%$a_2 = 0.353172906049774$ && $b_2 = -0.143851773179818$ \\
%$a_3 = -0.0420650803577195$ && $b_3 = \frac{1}{2}-(b_1+b_2)$  \\
%$a_4 = 1-2(a_1+a_2+a_3)$ && \\
%\end{tabular}
%\end{center}

We refer
to \cite{blanes} for other schemes which are used in the numerical experiments.
We remark that the exact flows of $\mathbf{S_1}$ and $\mathbf{S_2}$ can be replaced with suitable high order numerical approximations, while keeping the overall order of the splitting method unchanged.

%\subsection{General case}
%
%Assume a set of ordinary differential equations where the variables can be split into 3 parts  $\mathbf{y}$, $\mathbf{z}$ and $\mathbf{u}$
% 
%\begin{align} 
%\label{eq:simplifiednotation}
%\dot{\mathbf{y}} &= f(\mathbf{y})+G\mathbf{y}+w(\mathbf{z},\mathbf{u}), \nonumber \\
%\dot{\mathbf{z}} &= g(\mathbf{y},\mathbf{z}), \\
%\dot{\mathbf{u}} &= d(\mathbf{z}). \nonumber
%\end{align}
%Here $G$ is a constant matrix while $d$, $f$, $g$ and $w$ are functions, possibly non-linear, of the stated variables.

%We perform the splitting of this system by partitioning the vector field in the following two systems
%\begin{equation}\label{eq:flows1}
%S_1 = \left\{\begin{array}{lcl}\dot{\mathbf{y}} &=& f(\mathbf{y}) \\
%\dot{\mathbf{z}} &=& g(\mathbf{y},\mathbf{z}) \\
%\dot{\mathbf{u}} &=& \mathbf{0}
%\end{array} \right. ,
%\end{equation}

%and

%\begin{equation}\label{eq:flows2}
%S_2 = \left\{\begin{array}{lcl}\dot{\mathbf{y}} &=& G\mathbf{y}+w(\mathbf{z},\mathbf{u})\\
%\dot{\mathbf{z}} &=& \mathbf{0} \\
%\dot{\mathbf{u}} &=& d(\mathbf{z})
%\end{array} \right. .
%\end{equation}

%We notice that in $S_2$, $\mathbf{z}$ is constant. As a consequence $\mbox{\boldmath $\varphi$}_q$ is just a linear function of time. This in turn %means that the equation for $\mathbf{y}$ can be easily solved by the linear variation of constants formula $$\mathbf{y}(\tau)=e^{\tau G}\mathbf{y}_0+%\int_0^\tau e^{(\tau-\sigma)G}(w(\mathbf{z}_0)+\tilde{w}(\mathbf{z}_0)(\mbox{\boldmath $\varphi$}_{q,0}+N\mathbf{z}_0\sigma))\,d\sigma.$$
%This involves the computation of exponentials of $3\times 3$ diagonal matrices.

\subsection{Passivity-preserving splitting  of input-state-output port-Hamiltonian systems}
\label{splittingvessel}

We split the %vessel 
equations \eqref{eq:xiDiff}
%$$
%\dot{\mbox{\boldmath $\xi$}} = S(\mbox{\boldmath $\xi$})\nabla H(\mbox{\boldmath $\xi$}) -G\left(D(\mbox{\boldmath $\xi$})\mbox{\boldmath $\nu$} - \mbox{\boldmath $\tau$}\right),
%$$
by splitting the skew symmetric matrix $S(\mbox{\boldmath $\xi$})$ into the sum of two skew-symmetric terms $S(\mbox{\boldmath $\xi$})=S_1(\mbox{\boldmath $\xi$})+S_2(\mbox{\boldmath $\xi$})$.

\begin{Proposition}
\label{propositionpassivitysplitting}
The order $1$  splitting method \eqref{eq:lt}, when applied to the flows of the two systems 
\begin{equation}
\label{splitting}
\left\{\begin{array}{lcl}
\dot{\mbox{\boldmath $\xi$}}&=&S_1(\mbox{\boldmath $\xi$})\nabla H(\mbox{\boldmath $\xi$})\\
\mbox{\boldmath $\xi$}(0)&=&\mbox{\boldmath $\xi$}_0,
\end{array}\right.\, \mathrm{on}\,\, [0,h],\quad
\left\{\begin{array}{lcl}
\dot{\tilde{\mbox{\boldmath $\xi$}}}&=&S_2(\tilde{\mbox{\boldmath $\xi$}})\nabla H(\tilde{\mbox{\boldmath $\xi$}})-G\left(D\tilde{\mbox{\boldmath $\nu$}} - \mbox{\boldmath $\tau$}\right)\\
\tilde{\mbox{\boldmath $\xi$}}(0)&=&\tilde{\mbox{\boldmath $\xi$}}_0,
\end{array}\right.\, \mathrm{on}\,\, [0,h],
\end{equation}
and with $\tilde{\mbox{\boldmath $\xi$}}_0=\mbox{\boldmath $\xi$}(h)$, is input-output passive with input $\mbox{\boldmath $\tau$}$ and output $\tilde{\mbox{\boldmath $\nu$}}=G\,\nabla H(\tilde{\mbox{\boldmath $\xi$}})$.
\end{Proposition}

\begin{proof}
We want to prove that \eqref{passivity1} holds with $V=H$. We have
\begin{eqnarray*}
H(\tilde{\mbox{\boldmath $\xi$}}(h))-H(\mbox{\boldmath $\xi$}_0)&=& H(\tilde{\mbox{\boldmath $\xi$}}(h))-H(\tilde{\mbox{\boldmath $\xi$}}_0)+H(\mbox{\boldmath $\xi$}(h))-H(\mbox{\boldmath $\xi$}_0)\\
&=& \int_0^h\nabla H(\tilde{\mbox{\boldmath $\xi$}}(s))^T\dot{\tilde{\mbox{\boldmath $\xi$}}}(s)\,ds +\int_0^h\nabla H(\mbox{\boldmath $\xi$}(s))^T\dot{\mbox{\boldmath $\xi$}}(s)\,ds \\
&=&\int_0^h\nabla H(\tilde{\mbox{\boldmath $\xi$}}(s))^T\left(S_2(\tilde{\mbox{\boldmath $\xi$}})\nabla H(\tilde{\mbox{\boldmath $\xi$}})-G\left(D\tilde{\mbox{\boldmath $\nu$}}-\mbox{\boldmath $\tau$}\right)\right)\,ds +\\
& &+\int_0^h\nabla H(\mbox{\boldmath $\xi$}(s))^TS_1(\mbox{\boldmath $\xi$}(s))\nabla H(\mbox{\boldmath $\xi$}(s))\,ds, \\
\end{eqnarray*}
and by using the skew-symmetry of $S_1$ and $S_2$ we get finally
\begin{eqnarray*}
H(\tilde{\mbox{\boldmath $\xi$}}(h))-H(\mbox{\boldmath $\xi$}_0)&=&-\int_0^h\nabla H(\tilde{\mbox{\boldmath $\xi$}}(s))^TG\left(D\tilde{\mbox{\boldmath $\nu$}}-\mbox{\boldmath $\tau$}\right)\,ds\\
&=&-\int_0^h \tilde{\mbox{\boldmath $\nu$}}^T\left(D\tilde{\mbox{\boldmath $\nu$}}-\mbox{\boldmath $\tau$}\right)\,ds.
\end{eqnarray*}
Notice that the order of composition of the flows is immaterial.
If we now choose $V=H$, $\mathbf{y}=\tilde{\mbox{\boldmath $\xi$}}$, $\mathbf{u}=\mbox{\boldmath $\tau$}$ and 
$\boldsymbol{\zeta}=\tilde{\mbox{\boldmath $\nu$}}$, then it follows that the composition of the flows of the systems \eqref{splitting} satisfies Definition \ref{defpass} with  $g(\mathbf{y})=\tilde{\mbox{\boldmath $\nu$}}^T \, D \tilde{\mbox{\boldmath $\nu$}}$ and output $\tilde{\mbox{\boldmath $\nu$}}$.
%$\mathbf{h}(\mathbf{y}) = [(T^{-1}\mathbf{m})^T,
%(m_v^{-1}\mathbf{p})^T]^T$. 
%Thus the system is passive. The requirement $g(\mathbf{y}) \geq 0$ is fulfilled because the matrix $D$ is positive-definite. 
\end{proof}

\begin{Remark} The passivity result proved in Proposition~\ref{propositionpassivitysplitting} for the simple order $1$ splitting of two flows, generalises to
the order $1$ splitting and composition of $m$ flows arising from writing $S$ as a sum of $m$ skew-symmetric terms, $S(\mbox{\boldmath $\xi$})=\sum_{i=1}^mS_i (\mbox{\boldmath $\xi$})$. This  splitting method is a direct generalisation of  \eqref{eq:lt}.
\end{Remark}

\begin{Remark} The passivity result proved in Proposition~\ref{propositionpassivitysplitting} for the simple splitting method of order $1$ generalises easily to the Strang splitting given by \eqref{eq:s-v0} and \eqref{eq:s-v}. Note however, that it is crucial in the proof that the coefficients of the splitting method are real and positive. There are no splitting methods of order higher than $2$ with this property, \cite{blanesComplexcoeff}.  But there exist splitting methods of order greater than or equal to $3$  with complex coefficients which have positive real part, see for instance \cite{blanesComplexcoeff}. The construction and implementation of passivity preserving splitting methods of higher order for rigid body dynamics using complex coefficients is not pursued here, but can be an interesting subject for future investigations. 
\end{Remark}

\begin{Remark}
If the splitting method used for the integration has order $p$, then it is possible to replace the exact flows of the two subsystems with approximate numerical flows of order $p$ without compromising the order of the method. If the corresponding numerical flows are energy-preserving for the first system of the splitting, and energy-dissipative for the second system, then the overall splitting method with positive coefficients remains input-output passive. Methods with such preservation/dissipation properties  
can be found among implicit Runge-Kutta schemes for polynomial vector fields \cite{celledoni09epr}, and among discrete gradient methods and their generalizations for general vector fields \cite{mclachlanDG}, \cite{hairer}.
\end{Remark} 

We next consider a simplified system for which the proposed splitting method always dissipates the total energy independently of the sign of the (real) coefficients of the splitting method.
Specifically, we consider the system
\begin{equation}
\label{system2}
\dot{\mbox{\boldmath $\xi$}} = (S(\mbox{\boldmath $\xi$})-D(\mbox{\boldmath $\xi$}) )\nabla H(\mbox{\boldmath $\xi$}) ,
\end{equation}
with $S\in\mathbf{R}^{m\times m}$ skew symmetric, $D\in\mathbf{R}^{m\times m}$ symmetric and positive definite. Note that the system \eqref{eq:xiDiff} can be rewritten in this form under the assumption that $\mbox{\boldmath $\tau$}=-KG^T\nabla H(\mbox{\boldmath $\xi$})$ with $K\in\mathbf{R}^{p\times p}$ symmetric and positive semi-definite. Let $H$ be a quadratic energy function $ H(\mbox{\boldmath $\xi$})=\mbox{\boldmath $\xi$}^T\tilde{H} \,\mbox{\boldmath $\xi$}$, with $\tilde{H}\in\mathbf{R}^{m\times m}$ symmetric and positive definite. We can then define the energy norm
$$\|\mbox{\boldmath $\xi$}\|_{H}:=\sqrt{\mbox{\boldmath $\xi$}^T\tilde{H} \,\mbox{\boldmath $\xi$}},$$ associated with the inner product $\langle \cdot , \cdot\rangle_H:= \langle \cdot , \tilde{H} \cdot\rangle$. 
\begin{Lemma}\label{lemma1}
System \eqref{system2} satisfies
\begin{equation}
\label{onesidedLipshitzconstant}
\langle (S(\mbox{\boldmath $\xi$})-D(\mbox{\boldmath $\xi$}) )\nabla H(\mbox{\boldmath $\xi$}) , \mbox{\boldmath $\xi$} \rangle_H\le-2\sigma_{\min} \, \|\mbox{\boldmath $\xi$}\|_H^2
\end{equation}
and
\begin{equation}
\label{onesidedLipshitzconstantminus}
\langle -(S(\mbox{\boldmath $\xi$})-D(\mbox{\boldmath $\xi$}) )\nabla H(\mbox{\boldmath $\xi$}) , \mbox{\boldmath $\xi$} \rangle_H\le 2\sigma_{\max} \, \|\mbox{\boldmath $\xi$}\|_H^2
\end{equation}

with $\sigma_{\max}\ge\sigma_{\min}> 0$.
\end{Lemma}
\begin{proof}
We have $\nabla H =2\,\tilde{H} \, \mbox{\boldmath $\xi$}$, so
\begin{equation}
\label{eq:innprod}
\langle (S(\mbox{\boldmath $\xi$})-D(\mbox{\boldmath $\xi$}) )\nabla H(\mbox{\boldmath $\xi$}) , \mbox{\boldmath $\xi$} \rangle_H = -2\mbox{\boldmath $\xi$}^T\tilde{H}(S(\mbox{\boldmath $\xi$})+D(\mbox{\boldmath $\xi$}) )\tilde{H} \, \mbox{\boldmath $\xi$}
=-2\mbox{\boldmath $\xi$}^T\tilde{H}D\tilde{H}\mbox{\boldmath $\xi$},
\end{equation}
%and we have
%$$\mbox{\boldmath $\xi$}^T\tilde{H}D\tilde{H}\mbox{\boldmath $\xi$} = (\tilde{H}^{\frac{1}{2}}\mbox{\boldmath $\xi$})^T(\tilde{H}^{\frac{1}{2}}D\tilde{H}^{\frac{1}{2}})(\tilde{H}^{\frac{1}{2}}\mbox{\boldmath $\xi$})$$
%so that, i
If $\sigma_{\max} > 0$ and $\sigma_{\min} > 0$ are the maximum and minimum eigenvalue of $\tilde{H}^{\frac{1}{2}}D\tilde{H}^{\frac{1}{2}}$, we have
%$$\sigma_{\min} \|\mbox{\boldmath $\xi$}\|^2_H=\sigma_{\min}\|\tilde{H}^{\frac{1}{2}} \mbox{\boldmath $\xi$}\|_2^2\le 
%\mbox{\boldmath $\xi$}^T\tilde{H}D\tilde{H}\mbox{\boldmath $\xi$}
%% \mbox{\boldmath $\xi$}^T\tilde{H}^{\frac{1}{2}}(\tilde{H}^{\frac{1}{2}}D\tilde{H}^{\frac{1}{2}})\tilde{H}^{\frac{1}{2}}\mbox{\boldmath $\xi$}
%\le \sigma_{\max} 
%\|\tilde{H}^{\frac{1}{2}} \mbox{\boldmath $\xi$}\|^2_2=\sigma_{\max} \|\mbox{\boldmath $\xi$}\|^2_H.$$
\begin{equation}
\label{eq:eigenv}
\sigma_{\min} \|\mbox{\boldmath $\xi$}\|^2_H=\sigma_{\min}\|\tilde{H}^{\frac{1}{2}} \mbox{\boldmath $\xi$}\|_2^2\le 
\mbox{\boldmath $\xi$}^T\tilde{H}D\tilde{H}\mbox{\boldmath $\xi$}
% \mbox{\boldmath $\xi$}^T\tilde{H}^{\frac{1}{2}}(\tilde{H}^{\frac{1}{2}}D\tilde{H}^{\frac{1}{2}})\tilde{H}^{\frac{1}{2}}\mbox{\boldmath $\xi$}
\le \sigma_{\max} 
\|\tilde{H}^{\frac{1}{2}} \mbox{\boldmath $\xi$}\|^2_2=\sigma_{\max} \|\mbox{\boldmath $\xi$}\|^2_H.
%-2\sigma_{\max} \|\mbox{\boldmath $\xi$}\|^2_H\le -2\mbox{\boldmath $\xi$}^T\tilde{H}D\tilde{H}\mbox{\boldmath $\xi$}\le -2\sigma_{\min} \|\mbox{\boldmath $\xi$}\|^2_H, 
\end{equation}
% from the fact that there is a constant $\sigma\ge 0$ such that

Notice that, by Sylvester's law of inertia,  $\tilde{H}^{\frac{1}{2}}D\tilde{H}^{\frac{1}{2}}$ is symmetric positive definite since $D$ and $\tilde{H}$ are.  From \eqref{eq:innprod} and \eqref{eq:eigenv}, \eqref{onesidedLipshitzconstant} and \eqref{onesidedLipshitzconstantminus} follow.
%, and $\sigma_{\min}$ can be taken to be the smallest eigenvalue of $\tilde{H}^{\frac{1}{2}}D\tilde{H}^{\frac{1}{2}}$.
\end{proof}
The following Lemma is an adaptation of the results of \cite[p.180]{hairer91sod}.
\begin{Lemma} \label{lemma2} 
Assume $\sigma_{\max}\ge\sigma_{\min} > 0$ and \eqref{onesidedLipshitzconstant} and \eqref{onesidedLipshitzconstantminus} hold. %or \eqref{onesidedLipshitzconstant} holds for $\sigma_{\max}\ge 0$ 
Then for $t\ge 0$ along continuous solutions $\mbox{\boldmath $\xi$}$ of \eqref{system2} we have
\begin{equation}
\label{dissipationH}
H(\mbox{\boldmath $\xi$}(t))\le e^{-4t\sigma_{\min}} H(\mbox{\boldmath $\xi$}(0));
\end{equation}
for $t \le 0$ along continuous solutions $\mbox{\boldmath $\xi$}$ of \eqref{system2} we have
\begin{equation}
\label{propagationH}
H(\mbox{\boldmath $\xi$}(t))\le e^{-4t\sigma_{\max}} H(\mbox{\boldmath $\xi$}(0)).
\end{equation}
\end{Lemma}
\begin{proof}
We have
$$\frac{d H(\mbox{\boldmath $\xi$}(t))}{dt}=\nabla H(\mbox{\boldmath $\xi$})^T\dot{\mbox{\boldmath $\xi$}}(t)=2\langle (S(\mbox{\boldmath $\xi$})-D(\mbox{\boldmath $\xi$}) )\nabla H(\mbox{\boldmath $\xi$}) , \mbox{\boldmath $\xi$} \rangle_H,$$
for $t\ge 0$.
From \eqref{onesidedLipshitzconstant} %Lemma~\ref{lemma1} 
we then get
$$\frac{d H(\mbox{\boldmath $\xi$}(t))}{dt}\le-4\sigma_{\min} H(\mbox{\boldmath $\xi$}(t)),
$$
$H(t)=H(\mbox{\boldmath $\xi$}(t))$ is a continuous function of $t$ if $\mbox{\boldmath $\xi$}(t)$ is. Integrating the obtained differential inequality we get \eqref{dissipationH}.

Notice that integrating \eqref{system2} with negative time corresponds to solving 
$$\dot{\mbox{\boldmath $\xi$}} = -(S(\mbox{\boldmath $\xi$})-D(\mbox{\boldmath $\xi$}) )
\nabla H(\mbox{\boldmath $\xi$} ) $$ for $t\ge 0$, so differentiating $H(\mbox{\boldmath $\xi$}(t))$ with respect to $t$ and using \eqref{onesidedLipshitzconstantminus}, we arrive in this case \eqref{propagationH}. %in the proof of 
\end{proof}

\vskip0.2cm

\vskip0.2cm
Assume $S=S_1+S_2$ with both $S_1$ and $S_2$ skew-symmetric. In what follows we give a sufficient condition for \eqref{splittinggeneral} to be energy dissipative even when the coefficients of the splitting method are real and both positive and negative.
\begin{Proposition}
\label{propositionpassivitysplittingquadraticenergy}
The splitting method \eqref{splittinggeneral} of order $p\ge 1$ given by composing the flows  
{\small
\begin{equation}
\mathbf{S}_1: \left\{\begin{array}{lcl}
\dot{\mbox{\boldmath $\xi$}}&=&S_1(\mbox{\boldmath $\xi$})\nabla H(\mbox{\boldmath $\xi$})\\
\mbox{\boldmath $\xi$}(0)&=&\mbox{\boldmath $\xi$}_0
\end{array}\right.\, \mathrm{on}\,\, [0,h],\quad
\mathbf{S}_2: \left\{\begin{array}{lcl}
\dot{\tilde{\mbox{\boldmath $\xi$}}}&=&(S_2(\tilde{\mbox{\boldmath $\xi$}})  - D ( \tilde{\mbox{\boldmath $\xi$}}) )\nabla H(\tilde{\mbox{\boldmath $\xi$}})\\
\tilde{\mbox{\boldmath $\xi$}}(0)&=&\tilde{\mbox{\boldmath $\xi$}}_0
\end{array}\right.\, \mathrm{on}\,\, [0,h],
\end{equation}
}
and with $\tilde{\mbox{\boldmath $\xi$}}_0=\mbox{\boldmath $\xi$}(h)$, is energy dissipative if
\begin{equation}
\label{eq:condition}
2\,\sum_{i=1}^m\mu_ia_i+\mu_{m+1}a_{m+1}\ge 0, \quad \mu_i:=\left\{\begin{array}{ccc}
\sigma_{\min} &  \mathrm{if} & a_i>0,\\
\sigma_{\max} & \mathrm{if} & a_i <0,
\end{array}\right.
\end{equation}
with $\sigma_{\max}$ and $\sigma_{\min}$ the maximum and minimum eigenvalue of $\tilde{H}^{\frac{1}{2}}D\tilde{H}^{\frac{1}{2}}$.
  %input-output passive with input $\mbox{\boldmath $\tau$}$ and output $\tilde{\mbox{\boldmath $\nu$}}$.
\end{Proposition}
\begin{proof}
By the order $1$ conditions of the splitting method \cite[sec. III.3.4]{gni06hairer} we have
$$2\,\sum_{i=1}^ma_m+a_{m+1}=1,\quad    2\sum_{i=1}^mb_i=1.$$
Consider 
$$\mu_i:=\left\{\begin{array}{ccc}
\sigma_{\min} &  \mathrm{if} & a_i>0,\\
\sigma_{\max} & \mathrm{if} & a_i <0.
\end{array}\right.
$$
For the solution, $\tilde{\mbox{\boldmath $\xi$}}(t)$, of the system $\mathbf{S}_2$, we have from Lemma~\ref{lemma2} that on the time interval $[0,a_ih]$, 
$$H(\tilde{\mbox{\boldmath $\xi$}}(a_ih))%=\|\tilde{\mbox{\boldmath $\xi$}}(a_ih)\|_{H}^2
\le %e^{-2\mu_ia_ih}\, \|\tilde{\mbox{\boldmath $\xi$}}(0)\|_{H}^2=
e^{-4\mu_ia_ih}\,H(\tilde{\mbox{\boldmath $\xi$}}(0)).$$
%for $a_i > 0$ %, by Lemma~\ref{lemma2}
%$$H(\tilde{\mbox{\boldmath $\xi$}}(a_ih))=\|\tilde{\mbox{\boldmath $\xi$}}(a_ih)\|_{H}^2\le e^{-2\sigma_{\min}(a_ih)}\, \|\tilde{\mbox{\boldmath $\xi$}}(0)\|_{H}^2=e^{-2\sigma_{\min}(a_ih)}\,H(\tilde{\mbox{\boldmath $\xi$}}(0)).$$
%%where $\mu\ge 0$ and $\frac{\mu}{2}$ is the minimum eigenvalue of $\tilde{H}^{\frac{1}{2}}D\tilde{H}^{\frac{1}{2}}$.
%%plays the role of the constant $\sigma$ of 
%Similarly on
%$[0,a_jh]$, $a_j < 0$,
%$$H(\tilde{\mbox{\boldmath $\xi$}}(a_jh))=\|\tilde{\mbox{\boldmath $\xi$}}(a_jh)\|_{H}^2\le e^{-2\sigma_{\max}(a_jh)}\, \|\tilde{\mbox{\boldmath $\xi$}}(0)\|_{H}^2=e^{-2\sigma_{\max}(a_jh)}\,H(\tilde{\mbox{\boldmath $\xi$}}(0)).$$
%%where $\nu\ge 0$ and $\frac{\nu}{2}$ is the maximum eigenvalue of $\tilde{H}^{\frac{1}{2}}D\tilde{H}^{\frac{1}{2}}$.
% %is the one-sided Lipshitz constant of $\mathbf{S}_2$, $M$ is less than or equal to zero since $D$ is positive semi-definite \cite[p.180]{hairer91sod}. 
For the solution $\mbox{\boldmath $\xi$}(t)$ of  $\mathbf{S}_1$ on $[0,b_ih]$, we have
$$H(\mbox{\boldmath $\xi$}(b_ih))=%\|\mbox{\boldmath $\xi$}(b_ih)\|_H^2=\|\mbox{\boldmath $\xi$}(0)\|_H^2=
H(\mbox{\boldmath $\xi$}( 0)).$$
Using \eqref{splittinggeneral} and the above inequalities we have
$$H(\mbox{\boldmath $\xi$}^{j+1})\le e^{-4\mu_1 a_1h}e^{-4\mu_2 a_2h}\cdots e^{-4\mu_{m+1} a_{m+1}h}\cdots e^{-4\mu_2 a_2h} e^{-4\mu_1 a_1h}\,   H(\mbox{\boldmath $\xi$}^{j}), %   \|\mbox{\boldmath $\xi$}^{j}\|_H^2,
$$
%where
%$$\mu_i:=\left\{\begin{array}{ccc}
%\sigma_{\min} &  \mathrm{if} & a_i>0,\\
%\sigma_{\max} & \mathrm{if} & a_i <0,
%\end{array}\right.
%$$
 %$\mu_i$ is either the maximum or the minimum eigenvalue of $\tilde{H}^{\frac{1}{2}}D\tilde{H}^{\frac{1}{2}}$ depending on $a_i$ being positive or negative,
and so
$$H(\mbox{\boldmath $\xi$}^{j+1})\le e^{- 4(2\sum_{i=1}^m\mu_ia_i+\mu_{m+1}a_{m+1})}H(\mbox{\boldmath $\xi$}^{j})\le H(\mbox{\boldmath $\xi$}^{j}),$$
if \eqref{eq:condition} is satisfied. %which concludes the proof.
\end{proof}
%Notice that this sufficient condition is mostly useful when $\tilde{H}$ is symmetric positive definite and $\sigma_{\min}>0$, else if $\sigma_{\min}=0$ \eqref{eq:condition} cannot be satisfied.

The coefficients of the method of order four used in our numerical experiments are given in Appendix~\ref{SplittingCoef}. For this method we obtain that condition \eqref{eq:condition} is satisfied  when 
$\frac{\sigma_{\max}}{\sigma_{\min}}\le \frac{2(a_1+a_2)+a_4}{-2a_3}\approx 12$; and interchanging the role of $\mathbf{S}_1$ and $\mathbf{S}_2$, we obtain instead the condition  $\frac{\sigma_{\max}}{\sigma_{\min}}\le \frac{(b_1+b_2)}{-b_3}\approx 4.5$. 

\def\Z3x3{0{\ensuremath{_{3\times3}}}}
\def\I3x3{I{\ensuremath{_{3\times3}}}}

\section{A vessel rigid body model}
\label{model}

We give now more details about the system \eqref{eq:vesseleq} presented in the introduction. We will eventually show that it can be cast in the format of the system \eqref{eq:xiDiff}.
The generalized velocity vector $\mbox{\boldmath $\nu$}=
[\mathbf{v}^T, \mbox{\boldmath $\omega$}^T]^T \in \mathbf{R}^6$ lies in the body frame, where $\mathbf{v},\mbox{\boldmath $\omega$ } \in \mathbf{R}^3$ denote respectively the linear velocity and the angular velocity. The generalized position vector $\mbox{\boldmath $\eta$} = [\mathbf{x}^T, 
\mbox{\boldmath $\theta$}^T]^T \in \mathbf{R}^6$ lies in the spatial frame. Here $\mathbf{x}\in\mathbf{R}^3$ is the position vector and the components of $\mbox{\boldmath $\theta$}=[\phi,\theta,\psi]^T$ are the Euler angles, which provide a local representation of the attitude of the vessel. The attitude of the body evolves on the Lie group $SO(3)$, which is a manifold. The elements of $SO(3)$ can be represented as $3\times 3$ rotation matrices. If $Q$ is sufficiently close to the identity, then  the conversion between Euler angles and rotation matrices is %the two choices of coordinates is
\begin{equation}
\label{EulerangtoRot}
Q=\Psi(\mbox{\boldmath $\theta$})=R_{z,\psi }\,R_{y,\theta}\,R_{x,\phi},\quad \mbox{\boldmath $\theta$}=[\phi,\theta,\psi]^T,
\end{equation}
with
{\small
$$R_{x,\phi}=\left[
\begin{array}{ccc}
1 & 0 & 0 \\
0 & \cos(\phi) &-\sin(\phi)\\
0 & \sin(\phi) & \cos(\phi)
\end{array}
\right],\, R_{y,\theta}=\left[ \begin{array}{ccc}
\cos(\theta) & 0 & \sin(\theta) \\
0 & 1 &0\\
 -\sin(\theta) & 0 & \cos(\theta)
\end{array}
\right],\, 
R_{z,\psi}=\left[ \begin{array}{ccc}
\cos(\psi) & -\sin(\psi) & 0 \\
 \sin(\psi) & \cos(\psi) & 0\\
0 & 0 &1
\end{array}
\right], $$
}and $\Psi$ is a local diffeomorphism, i.e. $\Psi$ is invertible for $Q$ close to the identity.
Notice that $Q$ can always be transported close to the identity by multiplying with a rotation $Q_0^{-1}\approx Q^{-1}$ so that $Q_0^{-1}Q=\Psi(\mbox{\boldmath $\theta$})$.
The kinematic equation associated with \eqref{eq:vesseleq} is
\begin{equation}
\label{eq:kinematics}
\dot{\mbox{\boldmath $\eta$}} = J(\mbox{\boldmath $\eta$})
\mbox{\boldmath $\nu$},\qquad J(\mbox{\boldmath $\eta$}) = \left[\begin{array}{cc} Q & \Z3x3 \\
\Z3x3 & \Pi_e^{-1}Q \end{array} \right],
\end{equation}

\noindent with

$$
\Pi_e = \left[\begin{array}{ccc} \cos(\theta)\cos(\psi) & -\sin(\psi) & 0 \\
\cos(\theta)\sin(\psi) & \cos(\psi) & 0 \\
-\sin(\theta) & 0 & 1 \end{array} \right],
$$
see also \cite{marinecontrolsystems}, \cite{safstrom09mas}. 
We then get the kinematic equation $\dot{\mbox{\boldmath $\theta$}}=\Pi_e^{-1}Q\,\mbox{\boldmath $\omega$}$ for the Euler angles, which can be obtained by differentiating \eqref{EulerangtoRot} and is also valid locally. The matrix $\Pi_e$ is not invertible for $\theta=\pm \frac{\pi}{2}$ and $\psi=\pm \frac{\pi}{2}$. To avoid these singularities, we rewrite this kinematic equation 
as an equation for $Q$ 
\begin{equation}
\label{eq:attitudeSO3}
\dot{Q}=Q\,\widehat{\mbox{\boldmath $\omega$}}.
\end{equation}
This equation is globally defined on $SO(3)$.
The rotation matrix $Q$ transforms vectors in the body fixed frame $\{b\}$  
to vectors in the spatial frame $\{s\}$ \footnote{$Q:=R_b^e$ in the notation of \cite{perez07kmf} and \cite{marinecontrolsystems}.}.
Here
$$\widehat{\mbox{\boldmath $\,\,$}}:\mathbb{R}^3 \rightarrow \mathfrak{so}(3),\qquad \widehat{\mbox{\boldmath $\omega$}}=\left[
\begin{array}{ccc}
0 & -\omega_3 & \omega_2 \\
\omega_3 & 0 &-\omega_1\\
-\omega_2 & \omega_1 & 0
\end{array}
\right],$$
is the hat-map, and  $\mathfrak{so}(3)$ is the Lie algebra of $SO(3)$, consisting of $3\times 3$ skew-symmetric matrices, for further details see \cite{muller}.

Following \cite{lamb} (p. 151) and \cite{Kirchhoff}, equations \eqref{eq:vesseleq} can be obtained defining  the kinetic energy of the system to be
$$K=\frac{1}{2}\mbox{\boldmath $\nu$}^TM\mbox{\boldmath $\nu$},$$ leading to Kirchhoff's equations
\begin{eqnarray}
\label{kirchoff1}
\frac{d}{dt}\frac{\partial K}{\partial \mathbf{v}}& = & \frac{\partial K}{\partial \mathbf{v}} \times \mbox{\boldmath $\omega$}+ F_v,\\
\label{kirchoff2}
\frac{d}{dt}\frac{\partial K}{\partial \mbox{\boldmath $\omega$}}& = & \frac{\partial K}{\partial \mbox{\boldmath $\omega$}} \times \mbox{\boldmath $\omega$}+\frac{\partial K}{\partial\mathbf{v}}\times \mathbf{v}+ F_{\omega},
\end{eqnarray}
where $[\mathbf{p}^T, \mathbf{m}^T]^T:=\frac{\partial K}{\partial \mbox{\boldmath $\nu$}}=M\mbox{\boldmath $\nu$}$, and with $F_v$ and $F_{\omega}$ the external force and torque respectively. These include the damping forces, the gravitational and buoyancy forces, the control inputs, the forces due to the pipe, and environmental forces.%
In absence of external forces, the obtained equations have a Lie-Poisson structure, and can be derived by variational methods \cite{holm} (p.129). We refer to \cite{marsden99IMS} (p. 421) for the general description of the Euler-Poincare reduction on Lie groups (the relevant Lie group in this case is $SE(3)$). For formulations obtained applying the Hamilton-Pontriagin principle, see  \cite{BM}, and for the inclusion of external forces see \cite{marsdenwest} (p. 421).

If we set
\begin{equation}
\label{RBmomenta}
\mathbf{p}:=\frac{\partial K}{\partial \mathbf{v}},\qquad   \mathbf{m}:= \frac{\partial K}{\partial \mbox{\boldmath $\omega$}}, \quad 
\left[  \begin{array}{c}
\mathbf{p}\\
 \mathbf{m}
\end{array}
\right]= M \, \mbox{\boldmath $\nu$},
\end{equation}
the general form of the skew-symmetric Coriolis-centripetal matrix is
\begin{equation}
\label{Coriolisgeneral}
C(\mbox{\boldmath $\nu$}) = -\left[
\begin{array}{cc}
\Z3x3 & \widehat{\frac{\partial K}{\partial \mathbf{v}}}\\[0.1cm]
\widehat{\frac{\partial K}{\partial \mathbf{v}}} & \widehat{\frac{\partial K}{\partial \mbox{\boldmath $\omega$}}}
\end{array}
\right]=-\left[
\begin{array}{cc} 
\Z3x3 & \hat{\mathbf{p}} \\ 
 \hat{\mathbf{p}} & \hat{\mathbf{m}} \end{array} \right].
\end{equation}

Since $K$ is a homogeneous quadratic polynomial in  the components of $\mbox{\boldmath $\nu$}$, only the symmetric part of $M$ plays a role, and we can assume $M$ to be symmetric. The matrix $M$ can be split into a rigid body part $M_{RB}$ and a part for
added mass $M_A$. The same can be done for the kinetic energy $K=K_{RB}+K_{A}$ and the Coriolis-centripetal matrix $C(\mbox{\boldmath $\nu$})=C_{RB}(\mbox{\boldmath $\nu$})+C_A(\mbox{\boldmath $\nu$})$. 
Letting the origin of the body fixed frame coincide with the center of gravity in our model, the rigid body system inertia matrix $M_{RB}$ has the simple form
\begin{equation}
\label{MRB}
M_{RB} = \left[\begin{array}{cc} m_v\I3x3 & \Z3x3 \\
\Z3x3 & T \end{array} \right],
\end{equation}
where $m_v$ is the mass of the vessel, $\I3x3$ is the 3-dimensional identity
matrix, and $T$ is the inertia tensor.
The added mass term $M_A$ is a symmetric matrix, and 
%when the origin is in the center of gravity, $M_A$ can also be approximated by a  diagonal matrix (see \cite{marinecontrolsystems} p. 98 ).
we notice that when $M_A$ is set to zero, due to the structure of $M_{RB}$, the term $\frac{\partial K}{\partial\mathbf{v}}\times \mathbf{v}$ in \eqref{kirchoff2} vanishes. 

With this simplification, we get 
$$
\frac{\partial K}{\partial \mathbf{v}}=m_v  \mathbf{v},\qquad    \frac{\partial K}{\partial \mbox{\boldmath $\omega$}}=T \mbox{\boldmath $\omega$},
$$
and from \eqref{kirchoff1} and \eqref{kirchoff2} we deduce that  the rigid body Coriolis-centripetal matrix in this case can be rewritten in the form
%$C_{RB}(\mbox{\boldmath $\nu$})$ from \cite{marinecontrolsystems} is

$$
C_{RB}(\mbox{\boldmath $\nu$}) = \left[\begin{array}{cc} m_v\hat{\mbox{\boldmath $\omega$}}
& \Z3x3 \\ \Z3x3 & -\widehat{T\mbox{\boldmath $\omega$}} \end{array} \right].
$$
In the general case, 
%if we set
%$$
%\mathbf{p}:=\frac{\partial K}{\partial \mathbf{v}},\qquad   \mathbf{m}:= \frac{\partial K}{\partial \mbox{\boldmath $\omega$}},
%$$
%we have
%$$
%C(\mbox{\boldmath $\nu$}) = -\left[\begin{array}{cc} \Z3x3 & \hat{\mathbf{p}}}
 %\\ \hat{\mathbf{p}}} & \hat{\mathbf{m}} \end{array} \right].
%$$
$M$ is symmetric and %constant, it can be diagonalised via a constant orthogonal transformation, and, after an appropriate change of variables, $M$ can be assumed to be diagonal, 
$C(\mbox{\boldmath $\nu$})$ is  given by \eqref{Coriolisgeneral}, with $\mathbf{p}$ and $\mathbf{m}$  given by \eqref{RBmomenta}. We will here make the additional assumption that $M$ is block diagonal, (see \cite{marinecontrolsystems} p. 98 about this assumption), and that it is diagonal when the origin of the body frame is in the center of gravity. %where $m_v$ is now an appropriate  $3\times 3$ diagonal matrix. 

%Even though this is not so in general, we will hereafter assume $M_A=0$. Such simplification does not compromise the generality of the method proposed in this paper\footnote{In the general case $M_A$ is non zero and is not diagonal, however often because of symmetry considerations for the vessel, the matrix $M$ is symmetric and is close to being block diagonal, this fact can be exploited to generalise the method we propose in next section.}.

We have so far accounted for the first two terms in the equations  \eqref{eq:vesseleq}, and we will in the sequel describe the remaining forces and moments.

The term $\mathbf{g}(\mbox{\boldmath $\eta$})$ from Equation (\ref{eq:vesseleq}) takes restoring
forces and moments into account. In the model adopted in \cite{jensen09mac},
$\mathbf{g}(\mbox{\boldmath $\eta$})$, which lies in the body frame, is given as

$$
\mathbf{g}(\mbox{\boldmath $\eta$}) = \left[\begin{array}{c} Q^T \mathbf{g}_t^s \\ Q^T \mathbf{g}_r^s \end{array}
\right],
$$

\noindent where the superscript $s$ denotes that the vector lies in the spatial frame. 

Again following \cite{jensen09mac}, we have for the rotational part

$$
\mathbf{g}_r^s = (Q\mathbf{r}_r^b)\times(m_vg\mathbf{e}_3),
$$

\noindent where $g$ is the gravitational acceleration and $\mathbf{r}_r^b$ is the moment arm
in the body frame. The latter can be expressed as %- after the removal of the Euler angle
%dependence -

$$
\mathbf{r}_r^b = \left[\begin{array}{c} \overline{GM}_L (Q\mathbf{e}_1)^T\mathbf{e}_3 \\
\overline{GM}_T (Q\mathbf{e}_2)^T\mathbf{e}_3 \\ 0 \end{array}\right],
$$

\noindent where $\overline{GM}_L$ and $\overline{GM}_T$ are the longitudinal and the
transverse metacentric heights of the vessel, respectively.

The vector $\mathbf{g}_t^s$ accounts for buoyancy forces, and can be modelled as
$$
\mathbf{g}_t^s = g \rho_w A_{wp}(z-z_{eq}) \mathbf{e}_3,
$$
following an approach similar to that in \cite{jensen10anp}. Here $\rho_w$ is the density of water, $A_{wp}$ is the water plane area and $z_{eq}$ is the equilibrium water level. These forces can be directly included by adding a potential energy term to the kinetic energy $K=\frac{1}{2}\mbox{\boldmath $\nu$}^TM\mbox{\boldmath $\nu$}$, when considering the Lie group models of \cite{holm}, \cite{marsden99IMS} and \cite{BM}.

For the positive definite damping matrix $D(\mbox{\boldmath $\nu$})$, we also make use of a common simplifying assumption that the matrix is diagonal when the origin of the fixed body coordinate system coincides with the center of gravity. %????In addition, %due to the slow-speed assumption, 
%we will neglect to begin with the nonlinear damping terms, considering the case of nonlinear quadratic damping in the appendix.
 %i.e. that the damping matrix is constant  \cite{marinecontrolsystems}. 
 We therefore represent this matrix as
$$
D(\mbox{\boldmath $\nu$}) = \left[\begin{array}{cc} D_t(\mathbf{v})
& \Z3x3 \\ \Z3x3 & D_r (\mbox{\boldmath $\omega$}) \end{array} \right],
$$
where $D_t,D_r \in  \mathbb{R}^{3\times 3}$ are positive definite diagonal matrices.
%%%%%%%%%%%%%%%%%%%%%%%%%%%%%%%%%%%%%%%%%%%%%%%%%%%%%%%%%%%%%%%%%%%%%%%%%%%%%%%%
\subsection{Right hand side forces}\label{controls}

Let us now consider  the term $\mbox{\boldmath $\tau$} $ in \eqref{eq:vesseleq}.
Following the model presented in \cite{jensen10anp} we set

\begin{equation}
\begin{array}{rcl}
%\label{tau1}
\mbox{\boldmath $\tau$} &=& -J^T(\mbox{\boldmath $\eta$}) \mbox{\boldmath $\tau$}_{PID}, \\
%\label{tau2}
\mbox{\boldmath $\tau$}_{PID} &=& K_p \tilde{\mbox{\boldmath $\eta$}} + K_d \dot{\tilde{\mbox{\boldmath $\eta$}}}
+ K_i\int_{t_0}^{t}\! \tilde{\mbox{\boldmath $\eta$}}(\sigma) \,d\sigma,
\label{eq:control_tau}
\end{array}
\end{equation}

\noindent where the matrices

\begin{equation}
K_p=\left[\begin{array}{cc}K_p^{t} & \Z3x3 \\ \Z3x3 & K_p^{r}\end{array}\right], \hspace{4mm}
K_d=\left[\begin{array}{cc}K_d^{t} & \Z3x3 \\ \Z3x3 & K_d^{r}\end{array}\right],  \hspace{2mm} \text{and} \hspace{2mm}
K_i=\left[\begin{array}{cc}K_i^{t} & \Z3x3 \\ \Z3x3 & K_i^{r}\end{array}\right],
\label{eq:control_gains}
\end{equation}

\noindent are called controller gains, and
$\tilde{\mbox{\boldmath $\eta$}} := \mbox{\boldmath $\eta$} - \mbox{\boldmath $\eta$}_{ref}$.
%and
%
%$$
%J(\mbox{\boldmath $\eta$}) = \left[\begin{array}{cc} Q & 0 \\
%0 & \Pi_e^{-1}Q \end{array} \right],
%$$
%
%\noindent with
%
%$$
%\Pi_e = \left[\begin{array}{ccc} \cos(\theta)\cos(\psi) & -\sin(\psi) & 0 \\
%\cos(\theta)\sin(\psi) & \cos(\psi) & 0 \\
%-\sin(\theta) & 0 & 1 \end{array} \right],
%$$
%
%\noindent which depends on the Euler angles - n

This formulation of the control forces is based on the use of Euler angles, $\mbox{\boldmath $\theta$}$, to locally parametrize the rotation matrix $Q$.
%This formulation of the control forces is based on the use of Euler angles $\mbox{\boldmath $\theta$}$ for the local parametrization of the rotation $Q$, \eqref{EulerangtoRot}. 
Away from singularities, i.e. for $Q$ close to the identity, we have $\mbox{\boldmath $\theta$}=\Psi^{-1}( Q)$, with $\Psi$ given by \eqref{EulerangtoRot}.  A reformulation of the control forces using only $Q$ and the angular velocity $\omega$ will require a different design of the controller gains \eqref{eq:control_gains} and will not be pursued here.

In the case where $K_i$ is different from the zero matrix, we introduce a new set of unknowns
$$
\mbox{\boldmath $\varphi$}:=\int_{t_0}^t \tilde{\boldsymbol{\eta}} (\sigma)\, d\sigma,
$$
and add a corresponding new set of equations to the system \eqref{eq:vesseleq}
$$\dot{\mbox{\boldmath $\varphi$}}= \tilde{\boldsymbol{\eta}}(t),\qquad {\mbox{\boldmath $\varphi$}}(t_0)=0.$$
The integral term in $\mbox{\boldmath $\tau$}_{PID}$ can then be written as
$$\int_{t_0}^{t}\! \tilde{\boldsymbol{\eta}}(\sigma) \,d\sigma={\mbox{\boldmath $\varphi$}}, \qquad {\mbox{\boldmath $\varphi$}}=({\mbox{\boldmath $\varphi$}}_{\mathbf{x}}, {\mbox{\boldmath $\varphi$}}_{\boldsymbol{\theta}}),$$
(as with ${\boldsymbol{\theta}}$, the equation for ${\mbox{\boldmath $\varphi$}}_{\boldsymbol{\theta}}$ is valid locally). %     
Note that the system of equations is then autonomous, i.e. does not explicitly depend on $t$.
Though not required, we let $\mathbf{w} = \mathbf{0}$ for simplicity in what follows. 
In \cite{jensen10anp} the pipe is assumed to be fixed to the center of gravity of the vessel. The forces and moments from the pipe on the vessel, $\mathbf{\mathcal{X}}$, depend on the stress resultant and stress couple of the pipe.  Since the modeling and simulation of the pipe structure is not the focus of this paper, these forces are not taken into account. 
 %
%%%%%%%%%%%%%%%%%%%%%%%%%%%%%%%%%%%%%%%%%%%%%%%%%%%%%%%%%%%%%%%%%%%%%%%%%%%%%%%%
\subsection{The set of equations}

Using the explicit expressions for the right hand side forces, the system \eqref{eq:vesseleq} can be written as 

\begin{align}
\dot{\mathbf{p}} &= \mathbf{p}\times \mbox{\boldmath $\omega$} -\left(D_t(\mathbf{v})\mathbf{v}+Q^T\mathbf{g}_t^s(\mathbf{x})-\mbox{\boldmath $\tau$}_t\right), \nonumber  \\ 
\dot{\mathbf{m}} &= \mathbf{m} \times \mbox{\boldmath $\omega$}+\mathbf{p}\times \mathbf{v} - \left(D_r(\mbox{\boldmath $\omega$})\mbox{\boldmath $\omega$}+Q^T\mathbf{g}_r^s(Q)-\mbox{\boldmath $\tau$}_r\right), \label{eq:vessel}\\
\dot{Q}&=Q\,\widehat{\mbox{\boldmath $\omega$}}, \nonumber \\
\dot{\mathbf{x}} &= Q\,\mathbf{v}, \nonumber
\end{align}
with
\begin{align}
\mbox{\boldmath $\tau$}_r  &= -\left(\Pi_e^{-1}Q\right)^T(K_p^{r}\tilde{\mbox{\boldmath $\theta$}}
+ K_d^{r}\dot{\tilde{\mbox{\boldmath $\theta$}}} + K_i^{r} \mbox{\boldmath $\varphi$}_{\boldsymbol{\theta}} ), \label{controls1} \\
\boldsymbol{\tau}_t &= -Q^T \left(K_p^{t}\tilde{\mathbf{x}}
+ K_d^{t}\dot{\tilde{\mathbf{x}}} + K_i^{t} \mbox{\boldmath $\varphi$}_{\mathbf{x}}\right), \label{controls2} \\
\tilde{\mbox{\boldmath $\theta$}} &= \Psi^{-1}(Q)- \mbox{\boldmath $\theta$}_{ref}, 
\hspace{6mm}
\dot{\tilde{\mbox{\boldmath $\theta$}}} = \dot{\mbox{\boldmath $\theta$}} = \Pi_e^{-1}Q\, \mbox{\boldmath $\omega$} ,
\hspace{6mm}
\tilde{\mathbf{x}} = \mathbf{x}-\mathbf{x}_{ref}, \label{controls3}
\end{align}
and
\begin{align}
\dot{\mbox{\boldmath $\varphi$}}_{\boldsymbol{\theta}}&=\tilde{\mbox{\boldmath $\theta$}}, \nonumber\\
\dot{\mbox{\boldmath $\varphi$}}_{\mathbf{x}} &=\tilde{\mathbf{x}}. \nonumber
\end{align}

An alternative version of these equations, which uses Euler parameters to represent the attitude, is given in Appendix \ref{quatEq}.

\subsection{Passivity and splitting of the vessel equations}
For the vessel model, we now define $\mbox{\boldmath $\xi$} := [
\mathbf{p}^T, \mathbf{m}^T, \mathbf{q}_1^T, \mathbf{q}_2^T, \mathbf{q}_3^T, \mbox{\boldmath $\mu$}^T, \mathbf{x}^T ]$
with
$$
\mathbf{p} := m_v \mathbf{v}, \quad\mathbf{m} := T\boldsymbol{\omega},  \quad \mathbf{q}_i := Q^T\mathbf{e_i},\quad \mbox{\boldmath $\mu$}:=\mathcal{G}\mathbf{q}_3,
$$
with $m_v$ and $T$ diagonal and invertible matrices.
We take $H = H(\mbox{\boldmath $\xi$})$ to be the Hamiltonian of the system, given as the sum of the kinetic energy $K$ and the potential energy $U$. 
Defining
{\small
$$\mathcal{A}  := \mathrm{diag}(0,0,c), \quad \mathcal{G}  := m_v g\, \mathrm{diag}(\overline{GM}_L,\overline{GM}_T,0),\quad \tilde{\mathcal{G} } := \frac{1}{m_v g}\, \mathrm{diag}((\overline{GM}_L)^{-1},(\overline{GM}_T)^{-1},0),$$
}
and $c := g \rho_w A_{wp},$
we have
\begin{equation}
\label{Hamiltonian}
\begin{array}{lcl}K =\frac{1}{2}\mathbf{m}^T\ T^{-1} \mathbf{m} + \frac{1}{2}\mathbf{p}^T m_v^{-1}\mathbf{p},\\
U= \frac{1}{2}\mbox{\boldmath $\mu$}^T\tilde{\mathcal{G}}\mbox{\boldmath $\mu$}+\frac{1}{2}\mathbf{q}_1^T\mathbf{q}_1+\frac{1}{2}\mathbf{q}_2^T\mathbf{q}_2 + \frac{1}{2}\mathbf{q}_3^T\mathbf{q}_3+ \frac{1}{2}\mathbf{x}^T\mathcal{A}\mathbf{x},\\
H=K+U.
\end{array}
\end{equation}
Notice that the following identities hold
\begin{equation}
\label{gtsgrs}
\mathbf{g}_t^s=\mathcal{A}\, \mathbf{x},\quad Q^T\mathbf{g}_r^s=\mbox{\boldmath $\mu$}\times \mathbf{q}_3.
\end{equation}
We will in the next proposition replace the matrix equation $\dot{Q}=Q\hat{\boldsymbol{\omega}}$ in \eqref{eq:vessel} with three vector equations, one for each column $\mathbf{q}_i$, $i=1,2,3$, of $Q^T$.

\begin{Proposition}
\label{portHamSys}
The system \eqref{eq:vessel} can be written in the form
\begin{equation}
\label{eq:xiDiff1}
\dot{\mbox{\boldmath $\xi$}} = S(\mbox{\boldmath $\xi$})\nabla H(\mbox{\boldmath $\xi$}) -\tilde{I}\left(D(\mbox{\boldmath $\xi$})\mbox{\boldmath $\nu$} - \tilde{\mbox{\boldmath $\tau$}}\right),
\end{equation}
with
{\small
\begin{equation}
\label{eq:skewMat}
S(\mbox{\boldmath $\xi$}) := \begin{bmatrix} 
0_{3 \times 3} & \widehat{\mathbf{p}} & 0_{3 \times 3} & 0_{3 \times 3} & 0_{3 \times 3} &  0_{3 \times 3} &-Q^T \\ 
 \widehat{\mathbf{p}} &\widehat{\mathbf{m}} & 0_{3 \times 3} &0_{3 \times 3} & -\widehat{\mbox{\boldmath $\mu$}} & 0_{3 \times 3}  & 0_{3 \times 3}\\ 
 0_{3 \times 3} &  0_{3 \times 3} & -\widehat{T^{-1}\mathbf{m}} & 0_{3 \times 3} & 0_{3 \times 3} &0_{3 \times 3} & 0_{3 \times 3}\\
 0_{3 \times 3} & 0_{3 \times 3} & 0_{3 \times 3} &  -\widehat{T^{-1}\mathbf{m}} & 0_{3 \times 3}  &   0_{3 \times 3} & 0_{3 \times 3}\\
 0_{3 \times 3} & -\widehat{\mbox{\boldmath $\mu$}} &  0_{3 \times 3} &   0_{3 \times 3} &   -\widehat{T^{-1}\mathbf{m}} &   -\widehat{T^{-1}\mathbf{m}}\,\mathcal{G}  & 0_{3 \times 3}\\
 0_{3 \times 3} & 0_{3 \times 3} & 0_{3 \times 3} & 0_{3 \times 3} & -\mathcal{G}\,\widehat{T^{-1}\mathbf{m}} & 0_{3 \times 3} & 0_{3 \times 3}\\
Q & 0_{3 \times 3} &  0_{3 \times 3} &   0_{3 \times 3} &   0_{3 \times 3} &   0_{3 \times 3} & 0_{3 \times 3}
\end{bmatrix}, \ \ 
\tilde{I} := \begin{bmatrix} 
I{\ensuremath{_{6\times6}}} \\
0{\ensuremath{_{15\times6}}}
\end{bmatrix},
\end{equation}
}
$$D(\mbox{\boldmath $\xi$}):=D+\left[\begin{array}{cc} Q  & \Z3x3 \\
\Z3x3 & \Pi_e^{-1}Q \end{array} \right]^TK_d \left[\begin{array}{cc} Q & \Z3x3 \\
\Z3x3 & \Pi_e^{-1}Q\end{array} \right],$$
and

\begin{eqnarray*}
\tilde{\mbox{\boldmath $\tau$}}&=&\mbox{\boldmath $\tau$}-\left[\begin{array}{cc} Q & \Z3x3 \\
\Z3x3 & \Pi_e^{-1}Q \end{array} \right]^TK_d \left[\begin{array}{cc} Q & \Z3x3 \\
\Z3x3 & \Pi_e^{-1}Q \end{array} \right] \mbox{\boldmath $\nu$} \\[0.2cm]
&=&\left[\begin{array}{cc} Q & \Z3x3 \\
\Z3x3 & \Pi_e^{-1}Q \end{array} \right]^T \left[\begin{array}{c}
K_p^{r}\tilde{\mbox{\boldmath $\theta$}}+K_i^{r} \mbox{\boldmath $\varphi$}_{\boldsymbol{\theta}}\\
K_p^{t}\tilde{\mathbf{x}}+K_i^{t} \mbox{\boldmath $\varphi$}_{\mathbf{x}}
\end{array}\right]
\end{eqnarray*}

with 

\begin{align*}
\tilde{\mbox{\boldmath $\theta$}} &= \mbox{\boldmath $\theta$}- \mbox{\boldmath $\theta$}_{ref}, 
\hspace{6mm}
\dot{\tilde{\mbox{\boldmath $\theta$}}} = \dot{\mbox{\boldmath $\theta$}} = \Pi_e^{-1}Q\, \mbox{\boldmath $\omega$} ,
\hspace{6mm}
\tilde{\mathbf{x}} = \mathbf{x}-\mathbf{x}_{ref},
\hspace{6mm}
\dot{\mbox{\boldmath $\varphi$}}_{\boldsymbol{\theta}} =\tilde{\mbox{\boldmath $\theta$}},
\hspace{6mm}
\dot{\mbox{\boldmath $\varphi$}}_{\mathbf{x}} =\tilde{\mathbf{x}}.
\end{align*}

\end{Proposition}
\begin{proof}
The proof follows by a direct calculation of the gradient of $H$. 
\end{proof}

The obtained system is completely analog to  \eqref{eq:xiDiff}, and its passivity follows from Proposition~\ref{passivity}.
To obtain a passivity preserving splitting of the vessel model, we can proceed  by splitting $S$ in \eqref{eq:skewMat} as the sum of the two following skew symmetric terms %
{\scriptsize
$$
S_1(\mbox{\boldmath $\xi$}) := \begin{bmatrix} 
0_{3 \times 3} & \widehat{\mathbf{p}} & 0_{3 \times 3} & 0_{3 \times 3} & 0_{3 \times 3} &  0_{3 \times 3} &0_{3 \times 3} \\ 
\widehat{\mathbf{p}}  &\widehat{\mathbf{m}} & 0_{3 \times 3} &0_{3 \times 3} & 0_{3 \times 3} & 0_{3 \times 3}  & 0_{3 \times 3}\\ 
 0_{3 \times 3} &  0_{3 \times 3} & -\widehat{T^{-1}\mathbf{m}} & 0_{3 \times 3} & 0_{3 \times 3} &0_{3 \times 3} & 0_{3 \times 3}\\
 0_{3 \times 3} & 0_{3 \times 3} & 0_{3 \times 3} &  -\widehat{T^{-1}\mathbf{m}} & 0_{3 \times 3}  &   0_{3 \times 3} & 0_{3 \times 3}\\
 0_{3 \times 3} & 0_{3 \times 3} &  0_{3 \times 3} &   0_{3 \times 3} &   -\widehat{T^{-1}\mathbf{m}} &  0_{3 \times 3}  & 0_{3 \times 3}\\
 0_{3 \times 3} & 0_{3 \times 3} & 0_{3 \times 3} & 0_{3 \times 3} & 0_{3 \times 3} & 0_{3 \times 3} & 0_{3 \times 3}\\
0_{3 \times 3} & 0_{3 \times 3} &  0_{3 \times 3} &   0_{3 \times 3} &   0_{3 \times 3} &   0_{3 \times 3} & 0_{3 \times 3}
\end{bmatrix},
$$
$$
S_2(\mbox{\boldmath $\xi$}) := \begin{bmatrix} 
0_{3 \times 3} & 0_{3 \times 3} & 0_{3 \times 3} & 0_{3 \times 3} & 0_{3 \times 3} &  0_{3 \times 3} & -Q^T  \\ 
 0_{3 \times 3}&0_{3 \times 3} & 0_{3 \times 3} &0_{3 \times 3} & -\widehat{\mbox{\boldmath $\mu$}} & 0_{3 \times 3}  & 0_{3 \times 3}\\ 
 0_{3 \times 3} &  0_{3 \times 3} & 0_{3 \times 3} & 0_{3 \times 3} & 0_{3 \times 3} &0_{3 \times 3} & 0_{3 \times 3}\\
 0_{3 \times 3} & 0_{3 \times 3} & 0_{3 \times 3} &  0_{3 \times 3}& 0_{3 \times 3}  &   0_{3 \times 3} & 0_{3 \times 3}\\
 0_{3 \times 3} & -\widehat{\mbox{\boldmath $\mu$}} &  0_{3 \times 3} &   0_{3 \times 3} &  0_{3 \times 3} &   -\widehat{T^{-1}\mathbf{m}}\,\mathcal{G}  & 0_{3 \times 3}\\
 0_{3 \times 3} & 0_{3 \times 3} & 0_{3 \times 3} & 0_{3 \times 3} & -\mathcal{G}\,\widehat{T^{-1}\mathbf{m}} & 0_{3 \times 3} & 0_{3 \times 3}\\
Q  & 0_{3 \times 3} &  0_{3 \times 3} &   0_{3 \times 3} &   0_{3 \times 3} &   0_{3 \times 3} & 0_{3 \times 3}
\end{bmatrix} ,
$$
}
and then applying an energy-preserving/energy-dissipative integrator on each of the individual flows. The energy in this case is quadratic.  By Proposition~\ref{propositionpassivitysplitting}, %
such a splitting method preserves passivity. In the next section we consider the implementation of this method in a special case, when the splitting leads to two completely integrable flows.

\vspace{1em}

%%%%%%%%
%%%%%%%%%

\subsection{Implementation}
\label{implementation}

In the numerical experiments, and in this section, we will consider the case when the mass matrix has the form  \eqref{MRB}. This occurs for example  when the added mass is zero, i.e. $M_A=O$, or when $M_A$ has the same diagonal structure as $M_{RB}$. We will also assume the damping matrix $D$ to be constant. 
As a consequence the term $\mathbf{p}\times \mathbf{v}$ in the equation for the angular momentum is zero (because $\mathbf{p}$ and $\mathbf{v}$ are parallel). For this reason the matrix $S$ can be modified to have a skew symmetric diagonal block $-\widehat{\mbox{\boldmath $\omega$}}$ in the upper left diagonal corner, while both off diagonal blocks of the type $\hat{\mathbf{p}}$ disappear.
The equations \eqref{eq:vessel} simplify and become
\begin{equation}\label{eq:vflows}
\begin{aligned}
\dot{\mathbf{p}} &=-\widehat{\mbox{\boldmath $\omega$}}\,\mathbf{p} -\left(D_t\mathbf{v}+Q^T\mathcal{A}\mathbf{x}-\mbox{\boldmath $\tau$}_t\right), \\
\dot{\mathbf{m}} &= -\widehat{\mbox{\boldmath $\omega$}} \,\mathbf{m} - \left(D_r\mbox{\boldmath $\omega$}+\widehat{\mbox{\boldmath $\mu$}}\,\mathbf{q}_3-\mbox{\boldmath $\tau$}_r\right), \\
\dot{Q}&=Q\,\widehat{\mbox{\boldmath $\omega$}},\\
\dot{\mbox{\boldmath $\mu$}}&= -\mathcal{G}\,\widehat{\mbox{\boldmath $\omega$}}\, \mathbf{q}_3,\\
\dot{\mathbf{x}} &= Q\,\mathbf{v},\\
\dot{\mbox{\boldmath $\varphi$}}_{\boldsymbol{\theta}}&=\tilde{\mbox{\boldmath $\theta$}}, \\
\dot{\mbox{\boldmath $\varphi$}}_{\mathbf{x}}&=\tilde{\mathbf{x}}.
\end{aligned} 
\end{equation}
%with in addition the equations $\dot{\mbox{\boldmath $\varphi$}}_{\boldsymbol{\theta}}=\tilde{\mbox{\boldmath $\theta$}},$ and $\dot{\mbox{\boldmath $\varphi$}}_{\mathbf{x}}=\tilde{\mathbf{x}}$ when $K_i^r$ and $K_i^t$ are non zero.
This case is particularly interesting because it is possible to split $S$ as the sum of two skew symmetric terms $S=S_1+S_2$ while obtaining two completely integrable flows for which explicit solutions can be found. The system $\mathbf{S_1}$, arising from $S_1$, is a free-rigid body flow, as in the more general case of the previous section. Conversely, we will see that the system $\mathbf{S_2}$, arising from $S_2$, is linear and can be integrated exactly using the variation of constants formula. Proposition~\ref{propositionpassivitysplitting} applies directly to this case. In the more general case discrete gradient methods and their higher order generalisations, see  \cite{mclachlanDG} and \cite{hairer}, can be applied to solve $\mathbf{S_2}$.

%The splitting method also in this case should arise from the splitting of the skew symmetric matrix $S=S_1+S_2$, but the two skew symmetric matrices $S_1$ are slightly modified so that the second part of the splitting is a linear equation which can be integrated exactly using the variation of constants formula.

In what follows, regarding notation, we reparametrize time using $\gamma$, such that $\gamma = 0$ represents the current starting point. We also use the convention that index $0$ for a variable, e.g. $\mathbf{x}_0$, denotes the current initial value (at $\gamma = 0$).
\subsubsection{System $\mathbf{S_1}$: Free-rigid body integration}

The system $\mathbf{S_1}$  amounts to a system of free rigid body equations
\begin{equation}\label{eq:vflows1}
\mathbf{S_1} = \left\{\begin{aligned}
\dot{\mathbf{p}} &=-\widehat{\mbox{\boldmath $\omega$}}\,\mathbf{p}, \\
\dot{\mathbf{m}} &= \mathbf{m} \times \mbox{\boldmath $\omega$}, \\
\dot{Q}&=Q\,\widehat{\mbox{\boldmath $\omega$}},\\
\dot{\mbox{\boldmath $\mu$}}&=\mathbf{0},\\
\dot{\mathbf{x}} &= \mathbf{0}, \\ %Q\,\mathbf{v},\\
\dot{\mbox{\boldmath $\varphi$}}_{\boldsymbol{\theta}}&=\mathbf{0}, \\
\dot{\mbox{\boldmath $\varphi$}}_{\mathbf{x}}&=\mathbf{0}.%\tilde{\mathbf{x}}.
\end{aligned} \right. 
\end{equation}
This system is completely integrable. To integrate these equations we employ techniques developed in \cite{celledoni06ets} and \cite{celledoni08tec}. 

%Notice that compared to the splitting of section~\ref{splittingvessel}, $\dot{\mathbf{x}}=\mathbf{0}$ and correspondingly the term depending on $Q$ in the first equation has been moved to the system $\mathbf{S_2}$, while the term $\widehat{\mbox{\boldmath $\omega$}}\mathbf{p}$ has been moved from $\mathbf{S_2}$ to $\mathbf{S_1}$. These are changes which do not affect the integrability of $\mathbf{S_1}$, and the implementation details given in this section are valid also for $\mathbf{S_1}$ as given section~\ref{splittingvessel}.

$\mathbf{S_1}$ is integrated several times on some interval  $[0,\alpha h]$ as a part of the splitting methods, with $h$ being the step size of integration and $\alpha=a_i$, $i=1,\dots , m+1$, i.e. one of the coefficients of the splitting method. We observe that $\dot{\mbox{\boldmath $\omega$}}$ and $\dot{Q}$  do not depend on $\mathbf{v}$ and $\mathbf{x}$. As $\mbox{\boldmath $\omega$}$ can also be computed independently of $Q$, we proceed as follows. We first compute $\mbox{\boldmath $\omega$}$ using explicit formulae based on Jacobi elliptic functions, see \cite{celledoni08tec} Proposition 2.1. Then using $\mbox{\boldmath $\omega$}$, we solve numerically the equation for $Q$ in \eqref{eq:vflows1} using the Magnus series expansion. For a method of order $2$ the computed approximation for $Q$ is
$$
Q^{[2]}=Q_0\exp\left(h\, \widehat{ \mbox{\boldmath $\omega$}^{(\frac{1}{2})} }\right),
$$
with $\mbox{\boldmath $\omega$}^{(\frac{1}{2})}=\mbox{\boldmath $\omega$}(\frac{\alpha}{2h})$.
The two approximations of order $4$ and $6$ require the computation of commutators, and the use of quadrature. They give the approximations:
{\small
$$
Q^{[4]}=Q_0\exp\left(\alpha_1-\frac{1}{12}[\alpha_1,\alpha_2]\right),\quad \alpha_1:=\frac{h}{2}(\widehat{\mbox{\boldmath $\omega$}(c_1h)}+\widehat{\mbox{\boldmath $\omega$}(c_2h)}),\quad \alpha_2:=\sqrt{3}h(\widehat{\mbox{\boldmath $\omega$}(c_2h)}-\widehat{\mbox{\boldmath $\omega$}(c_1h)}),
$$
}
with $c_1=\frac{1}{2}-\frac{\sqrt{3}}{6}$ $c_2=\frac{1}{2}+\frac{\sqrt{3}}{6}$; and
$$
Q^{[6]}=Q_0\exp\left(\alpha_1+\frac{1}{12}\alpha_3+\frac{1}{240}[-20\alpha_1-\alpha_3+s_1,\alpha_2+r_1]\right),$$
where
$$s_1:=[\alpha_1,\alpha_2],\quad r_1=-\frac{1}{60}[\alpha_1,2\alpha_3+s_1],$$
and
{\small
$$ \alpha_1:=h\widehat{\mbox{\boldmath $\omega$}(c_2h)},\quad \alpha_2:= \frac{\sqrt{15}h}{3}(\widehat{\mbox{\boldmath $\omega$}(c_3h)}-\widehat{\mbox{\boldmath $\omega$}(c_1h)}),\quad \alpha_3:=\frac{10h}{3}(\widehat{\mbox{\boldmath $\omega$}(c_3h)}-2\widehat{\mbox{\boldmath $\omega$}(c_2h)})+\widehat{\mbox{\boldmath $\omega$}(c_1h)}),
$$
}
with $c_1=\frac{1}{2}-\frac{\sqrt{15}}{10}$, $c_2=\frac{1}{2}$ and $c_3=\frac{1}{2}+\frac{\sqrt{15}}{10}$.
See also \cite{celledoni06ets}, \cite{gustavson} for details. This method will be referred to as the Magnus method. It was shown in  \cite{blanes02} that this method requires a minimal number of commutators. One can truncate the Magnus expansion to arbitrary high order. Here we have implemented this method to order 2, 4, and 6, and combined it with splittings of the same order.

%Alternatively the attitude can obtained computing a Jacobi elliptic integral of the third kind \cite{celledoni08tuo}.

Once $\mbox{\boldmath $\omega$}$ and $Q$ are computed to the desired accuracy, $\mathbf{p}$ can be easily obtained by
$$
\begin{array}{rcl}
\mathbf{p}(\gamma) &=& Q^TQ_0\mathbf{p}_0, \\
%\dot{\mathbf{x}} &=& Q\mathbf{v} = Q Q^TQ_0 \mathbf{v}_0 = Q_0\mathbf{v}_0,
\end{array}
$$
while
\begin{eqnarray*}
\mbox{\boldmath $\mu$}(\gamma)&=& \mbox{\boldmath $\mu$}_0,\\
\mathbf{x}(\gamma)&=&\mathbf{x}_0,\\
\mbox{\boldmath $\varphi$}_q(\gamma)&=& \mbox{\boldmath $\varphi$}_{q,0},\\ 
\mbox{\boldmath $\varphi$}_x(\gamma)&=&\mbox{\boldmath $\varphi$}_{x,0}.
\end{eqnarray*}
%$$\mbox{\boldmath $\mu$}(\gamma)= \mbox{\boldmath $\mu$}_0,$$
%$$\mathbf{x}(\gamma)=\mathbf{x}_0.$$
%$$\mathbf{x}(\gamma)=\mathbf{x}_0+\gamma \, Q_0\mathbf{v}_0.$$

%\noindent where $\mathbbm{q}$ is the Euler parameter describing the rotation of the
%body and $\Omega = \left(0,T^{-1}\mathbf{m}\right) =
%\left(0,\mbox{\boldmath $\omega$} \right)$.

%Finally we obtain
%\begin{eqnarray*}
%%\mbox{\boldmath $\varphi$}_q(\gamma)&=& \mbox{\boldmath $\varphi$}_{q,0},\\ % \int_{0}^{\alpha(h)}\mbox{\boldmath $\theta$}(Q(s))\,\,ds-\alpha(h)\mbox{\boldmath $\theta$}_0,\\
%\mbox{\boldmath $\varphi$}_x(\gamma)&=&\mbox{\boldmath $\varphi$}_{x,0}+\gamma\,\tilde{\mathbf{x}}_0. \\%+\frac{1}{2}\gamma^2Q_0\mathbf{v}_0, \quad \gamma\in [0,\alpha h].\\
%%(\mbox{\boldmath $\varphi$}_x(\gamma)&=&\mbox{\boldmath $\varphi$}_{x,0}+\gamma\tilde{\mathbf{x}}_0+\frac{1}{2}\gamma^2Q_0\mathbf{v}_0, \quad \gamma\in [0,\alpha h].)\\
%\end{eqnarray*}
The rotational part of the equations evolves on $SO(3)$. The Magnus method respects the manifold structure of this Lie gorup. Compared to using Euler angles, this approach avoids singularities by construction. 
This may not be important when simulating the vessel alone, because only a certain limited range of rotations are allowed. However, it becomes more difficult to avoid the singularities of Euler angles  when considering the attitude of the vessel and of each cross sections of the pipe simultaneously,  \cite{jensen10anp}.
The Magnus expansion (which by construction provides an attitude that evolves on $SO(3)$) can also be applied when the mass matrix of the system is more general and the added mass matrix is not diagonal.

\subsubsection{System $\mathbf{S_2}$: Integration of the damping, gravitational and control forces.}

It is easy to realize that the system of differential equations $\mathbf{S_2}$ % free rigid body equations
\begin{equation}
\label{eq:vflows2}
\mathbf{S_2} = \left\{\begin{aligned}
\dot{\mathbf{p}} &=  -\left(D_tm_v^{-1}\mathbf{p}+Q^T\mathcal{A}\mathbf{x}-\mbox{\boldmath $\tau$}_t\right),\\
\dot{\mathbf{m}} &= - \left(D_rT^{-1}\mathbf{m}+\widehat{\mbox{\boldmath $\mu$}}\,\mathbf{q}_3-\mbox{\boldmath $\tau$}_r\right),\\
%A^{(2)}\mathbf{m}
%+ \mathbf{w}_1^{(2)} + \mathbf{w}_2^{(2)} t,\\
%\left( A^{(1)}\mathbf{v} + \mathbf{w}_1^{(1)}
%+ \mathbf{w}_2^{(1)}t   \right),\\
\dot{Q}&= \mathbf{0},\\
\dot{\mbox{\boldmath $\mu$}}&= - \mathcal{G}\,\widehat{\mbox{\boldmath $\omega$}}\, \mathbf{q}_3,\\
\dot{\mathbf{x}} &= Q\mathbf{v}, \\
\dot{\mbox{\boldmath $\varphi$}}_{\boldsymbol{\theta}}&=\tilde{\mbox{\boldmath $\theta$}}, \\
\dot{\mbox{\boldmath $\varphi$}}_{\mathbf{x}}&=\tilde{\mathbf{x}}.%\mathbf{0}
\end{aligned} \right. 
\end{equation}
is linear. As with the system $\mathbf{S_1}$, $\mathbf{S_2}$ should be integrated several times on some interval $[0,\beta h]$, where $\beta$ is $b_1,\, b_2,\dots,b_{m}$, i.e. one of the coefficients of the splitting method. Using \eqref{controls1}, \eqref{controls2}, \eqref{controls3} and the variation of constants formula we obtain the following explicit expressions for the solution of \eqref{eq:vflows2}
\begin{align*}
Q(\gamma) &= Q_0,\\
\mbox{\boldmath $\varphi$}_{\boldsymbol{\theta}}(\gamma)&=\mbox{\boldmath $\varphi$}_{\boldsymbol{\theta},0}+\gamma\, \tilde{\mbox{\boldmath $\theta$}}_0, \hskip0.5cm \tilde{\mbox{\boldmath $\theta$}}_0:=\mbox{\boldmath $\theta$}_0-\mbox{\boldmath $\theta$}_{ref}, \\[0.2cm]
%\mbox{\boldmath $\varphi$}_{\mathbf{x}}(\gamma)&=\mbox{\boldmath $\varphi$}_{\mathbf{x},0},\\
\left[\begin{array}{c}
\mathbf{m}(\gamma)\\
\mbox{\boldmath $\mu$}(\gamma)
\end{array}\right]&=e^{\gamma A}\, \left[\begin{array}{c}
\mathbf{m}_0\\
\mbox{\boldmath $\mu$}_0
\end{array}\right] +\gamma \,\phi_1(\gamma A)\left[\begin{array}{c}
\mathbf{w}_{r,1}\\
\mathbf{0}
\end{array}\right]+\gamma^2\,\phi_2(\gamma A)\left[\begin{array}{c}
\mathbf{w}_{r,2}\\
\mathbf{0}
\end{array}\right],\\
%\mbox{\boldmath $\omega$} (\gamma) &=  e^{\gamma A} \mbox{\boldmath $\omega$}_0+ \gamma \,\phi_1(\gamma A)\mathbf{w}_{r,1},+\gamma^2\,\phi_2(\gamma A)\mathbf{w}_{r,2},\\
%-D_RT^{-1}\mathbf{m}-Q^T\mathbf{g}_r^s+\mbox{\boldmath $\tau$}_2\\
%A^{(2)}\mathbf{m}
%+ \mathbf{w}_1^{(2)} + \mathbf{w}_2^{(2)} t,\\
\left[\begin{array}{c}
\mathbf{p}(\gamma)\\
\mathbf{x}(\gamma)\\
\mbox{\boldmath $\varphi$}_{\mathbf{x}}(\gamma)
\end{array}\right]&=e^{\gamma B}\, \left[\begin{array}{c}
\mathbf{p}_0\\
\mathbf{x}_0\\
\mbox{\boldmath $\varphi$}_{\mathbf{x},0}
\end{array}\right] +\gamma \,\phi_1(\gamma B)\left[\begin{array}{c}
\mathbf{w}_t\\
\mathbf{0}\\
-\mathbf{x}_{ref}
\end{array}\right],\\
%\mathbf{v}(\gamma) &=  e^{\gamma B}\mathbf{v}_0+\gamma\,\phi_1(\gamma B) \mathbf{w}_t, \\%m_v^{-1}\left(D_T\mathbf{v}+Q^T\mathbf{g}_t^s-\mbox{\boldmath $\tau$}_1\right)\\
%\left( A^{(1)}\mathbf{v} + \mathbf{w}_1^{(1)}
%+ \mathbf{w}_2^{(1)}t   \right),\\
%\mathbf{x} (\gamma)&= \mathbf{x}_0, 
\end{align*}
where
{\scriptsize
$$A:=\left[\begin{array}{cc}
-(D_rT^{-1}+Q_0^T\Pi_e^{-T}K_d^r\Pi_e^{-1}Q_0T^{-1}) & \widehat{q_3}\\[0.3cm]
\mathcal{G}\widehat{q_3}T^{-1} & O\\
\end{array}\right]
,
$$
$$
B:=
\left[\begin{array}{ccc}
-(D_tm_v^{-1}+Q_0^TK_d^tQ_0m_v^{-1}), & -(Q_0^T\mathcal{A}+Q_0^TK_p^t) & -Q_0^TK_i^t \\[0.3cm]
Q_0m_v^{-1} & O & O\\[0.2cm]
O & I  & O
\end{array}\right],$$
}
\begin{align*}
\mathbf{w}_{r,1}& :=-Q_0^T\Pi_e^{-T}(K_p^r \tilde{\mbox{\boldmath $\theta$}}_0+K_i^r\mbox{\boldmath $\varphi$}_{\boldsymbol{\theta},0}),\\
\mathbf{w}_{r,2}& :=-Q_0^T\Pi_e^{-T}K_i^r\tilde{\mbox{\boldmath $\theta$}}_0, \\
%\mathbf{w}_t& = -m_v^{-1}Q_0^T(K_p^t\tilde{\mathbf{x}}_0+K_i^t\,\mbox{\boldmath $\varphi$}_{\mathbf{x},0}),\\
\mathbf{w}_t& := Q_0^T(K_p^t\,\mathbf{x}_{ref}),
\end{align*}
and 
$$\phi_k(z) = \frac{1}{(k - 1)!} \int_0^1 e^{z (1-x)} x^{k-1} \,dx, \quad k=1,2.$$ %\frac{e^z-1}{z},\quad \phi_2(z)=
From the variation of constants formula we deduce
{\small
$$\gamma \,\phi_1(\gamma A)\left[\begin{array}{c}
\mathbf{w}_{r,1}\\
\mathbf{0}
\end{array}\right]+\gamma^2\,\phi_2(\gamma A)\left[\begin{array}{c}
\mathbf{w}_{r,2}\\
\mathbf{0}
\end{array}\right]=-\int_0^{\gamma}e^{(\gamma-\sigma)A}Q_0^T\Pi_e^{-T}(K_p^{r}\tilde{\mbox{\boldmath $\theta$}}
 + K_i^{r} \mbox{\boldmath $\varphi$}_{\boldsymbol{\theta}}(\sigma))\, d\sigma,$$
 }
 where $\tilde{\mbox{\boldmath $\theta$}}$ does not depend on $\sigma$.

\section{Numerical results}
\label{experiments}

In this section we report the results of some numerical experiments for %
\eqref{eq:vessel}. The parameter values used for the rigid body vessel model are given in Appendix \ref{ParamValues}. For the parametrization of the rotation matrices we have used unit quaternions, see  Appendix \ref{quatEq}. The reference solution has been computed with the classical Runge-Kutta method of order $4$ (RK4) with small step size $h = 10^{-6}$ in the order tests, Figure~\ref{order}, and $h = 10^{-4}$ in the other experiments. %, 

In Figure \ref{evo}, we show the features of the solution of the considered controlled system, and see that the desired equilibrium is attained. We plot the evolution of the generalized positional coordinates over the interval $t \in [0,200]$. The controls are turned on at $t=50$, \cite{jensen09mac}. Let us denote by $x$ and $y$ the first two components of the vector $\mathbf{x}$, and recall that $\psi$ is the third component of $\boldsymbol{\theta}$. The results show rapid convergence towards the reference values for $x$, $y$ and $\psi$. There is a slight initial overshoot due to the presence of the integral term, which will vanish over time. The overshoot can be reduced by 
further optimizing the choice of controller gains. In our experiments, choosing different values for the environmental forces, we observe that the integral term is in general necessary to ensure convergence to the set point, see also \cite{jensen09mac}. The uncontrolled variables show expected behaviour. The remaining angles have small initial oscillations which are damped towards 0. In the case of $\phi$ the damping is present, but too small to be observed from the plot. When the controls are turned on, the elevation $z$ moves away from the equilibrium, but it rapidly stabilises back to the desired value.

\begin{figure}
  \centering 
   \includegraphics[width=1\textwidth]{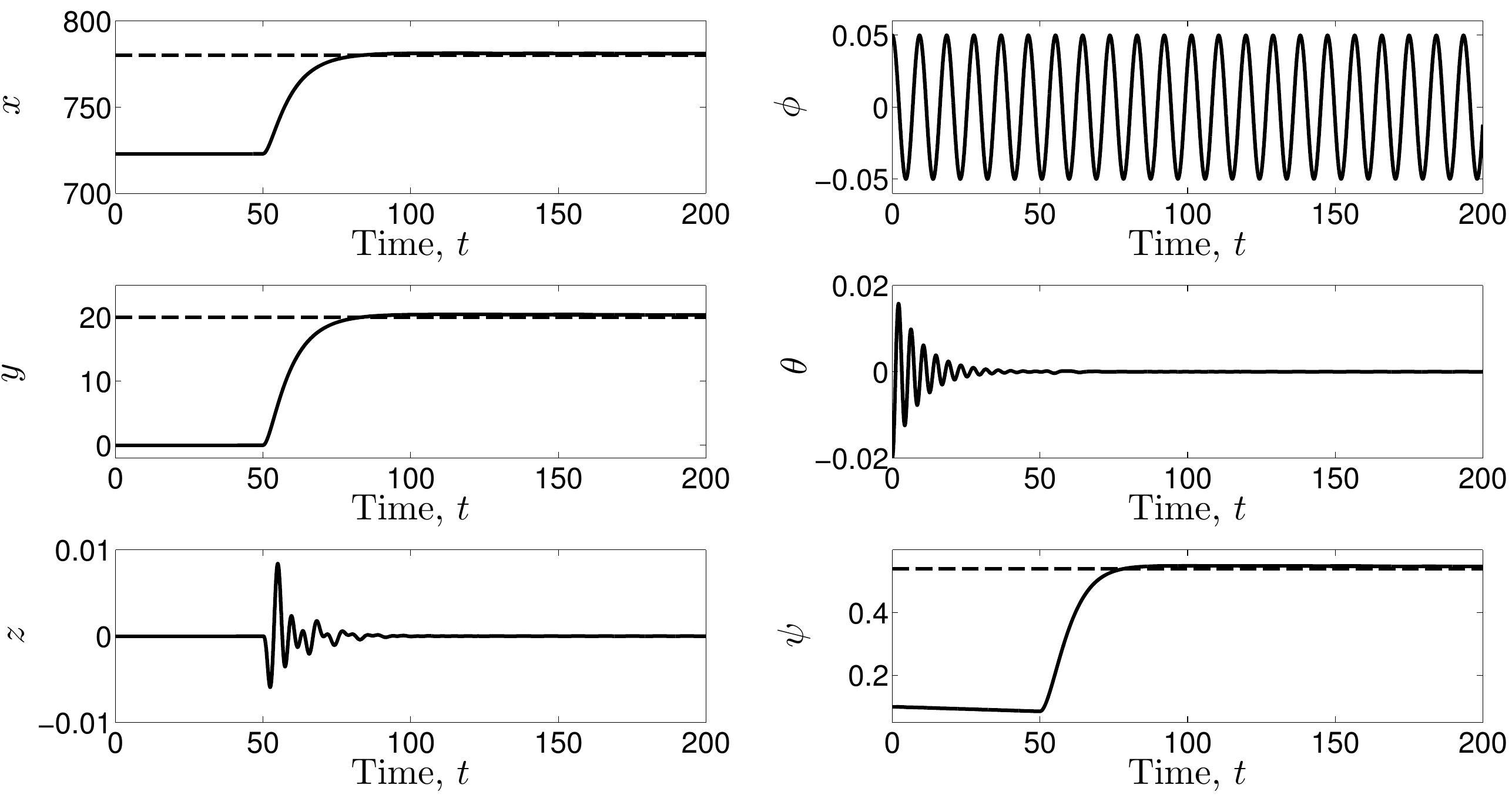}
\caption{Evolution of the generalized position variables over time with $t \in [0,200]$. The PID controller is turned on at $t=50$. All the controlled variables converge rapidly towards the reference values, with a small overshoot caused by the integral term in the controller.  \label{evo}}
  
\end{figure}

We now verify the order of accuracy of the numerical solutions obtained by the order 2, order 4 and order 6 splitting schemes, which we will refer to as SP2, SP4 and SP6, respectively. We define the relative error for the angular momentum at time $t_n$
$$
e^{[\mathbf{m}]} = \frac{\vectornorm{\mathbf{m}_n-\mathbf{m}(t_n)}_2}{\vectornorm{\mathbf{m}(t_n)}_2},
$$
where $\mathbf{m}(t_n)$
is the reference solution, and $\mathbf{m}_n \approx \mathbf{m}(t_n)$ is the approximation given by the numerical method whose accuracy we want to investigate. Similar relative errors $e^{[q]}$, $e^{[\mathbf{v}]} $ and $e^{[\mathbf{x}]} $ are considered for $q$, $\mathbf{v}$ and $\mathbf{x}$, respectively. Here $q$ is the unit quaternion representing the attitude $Q$. The errors are evaluated at the end of the time interval, $t\in [0,1]$. In Figure \ref{order} we plot the relative errors against the step size for step sizes $h_{k} =\frac{1}{2^k}$  with $k =0,1,\hdots,4$, and show that the correct order is attained by the methods.

%?????
%
%We now verify the order of accuracy of the numerical solutions obtained by the order 2 and order 4 %and order 6 
%splitting schemes, which we will refer to as SP2, SP4 %and SP6 
%respectively. We denote with $\mathbf{y} := [
%\mathbf{p}^T, \mathbf{m}^T, q, \mbox{\boldmath $\mu$}^T, \mathbf{x}^T ,\mbox{\boldmath $\varphi$}_{\boldsymbol{\theta}}^T, \mbox{\boldmath $\varphi$}_{\mathbf{x}}^T]$, where $q$ is the unit quaternion representing the attitude rotation $Q$.
%We define the relative error at time $t_n$
%%$$
%%e^{[\mbox{\boldmath $\omega$}]} = \frac{\vectornorm{\mbox{\boldmath $\omega$}_n-\mbox{\boldmath $\omega$}(t_n)}_2}{\vectornorm{\mbox{\boldmath $\omega$}(t_n)}_2},
%%$$
%$$
%e^{[\mathbf{y}]} = \frac{\vectornorm{\mathbf{y}_n-\mathbf{y}(t_n)}_2}{\vectornorm{\mathbf{y}(t_n)}_2},
%$$
%where $\mathbf{y}(t_n)$
%is the reference solution, and $\mathbf{y}_n \approx \mathbf{y}(t_n)$ is the approximation given by the numerical method whose accuracy we want to investigate. %Similar relative errors $e^{[\boldsymbol{\theta}]}$, $e^{[\mathbf{v}]} $ and $e^{[\mathbf{x}]} $ are considered for $\boldsymbol{\theta}$, $\mathbf{v}$ and $\mathbf{x}$, respectively. 
%The errors are evaluated at the end of the time interval, $t\in [0,10]$. In Figure \ref{order} we plot the relative errors against the step size for step sizes $h_{k} =\frac{1}{2^k}$  with $k =0,1,\hdots, 5$ and show that the correct order is attained for the methods.

\begin{figure}
  \includegraphics[width=0.85\textwidth]{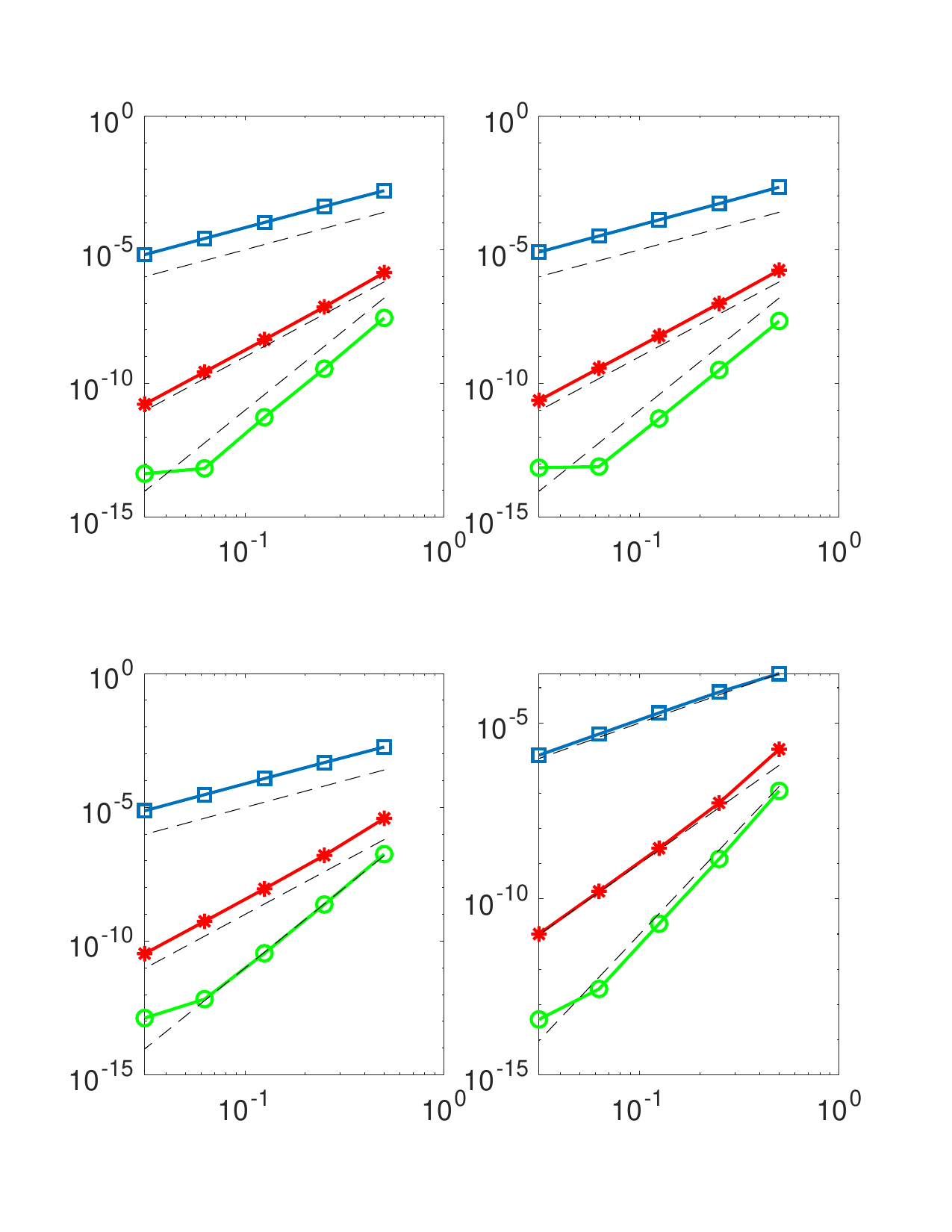}%Converg}
\caption{Dependence of the relative endpoint error ($y$-axis) on the size of the integration step ($x$-axis) for \eqref{eq:vflows}. 
%various solution components. From top left angular velocity $\mbox{\boldmath $\omega$}$, orientation $\mbox{\boldmath $\theta$}$, velocity $\mathbf{v}$ and position $\mathbf{x}$.  
Results for SP2, SP4 
and SP6 
are plotted in logarithmic scale for $t \in [0,1]$, $h=\frac{1}{2^k}$ and $k=0,1,\hdots , 4$. Top: global error in angular momentum $\mathbf{m}$ (left) and attitude $q$ (right). Bottom: global error in linear momentum $\mathbf{p}$ and position $\mathbf{x}$. The expected order for the splitting schemes is obtained. \label{order}}
\end{figure}

Finally, in Figure \ref{EnErrorEvo}, we compare the norm of the scaled control input and the energy  function $H$, given by  \eqref{Hamiltonian} for the splitting methods SP2 and SP4, and two explicit Runge-Kutta methods: the Improved Euler method (a second order Runge-Kutta method\footnote{$y_{n+1}=y_n+\frac{h}{2}\big( f(y_n)+f(y_n+hf(y_n))\big)$}) and RK4. Methods of the same order are compared using relatively large step sizes, specifically $h=2/3$ (top figures) and $h=1.9375$ (bottom figures). The values from the splitting methods in this experiment cannot be distinguished from the corresponding values from the exact solution, while the values from the explicit Runge-Kutta methods differ noticeably. For larger values of the step-sizes the explicit Runge-Kutta methods are unstable. A more detailed analysis of the stability and of the error of splitting methods for similar problems in rigid body dynamics is under investigation. Preliminary results show that as $h$ increases, splitting methods suffer of order reduction, but they do not become unstable and the error remains bounded. This is similar to what shown in the case of Schr{\"o}dinger equations, see \cite{janke}. %, in particular for the rotational part.

\begin{figure}
  \centering 
     \includegraphics[width=1\textwidth]{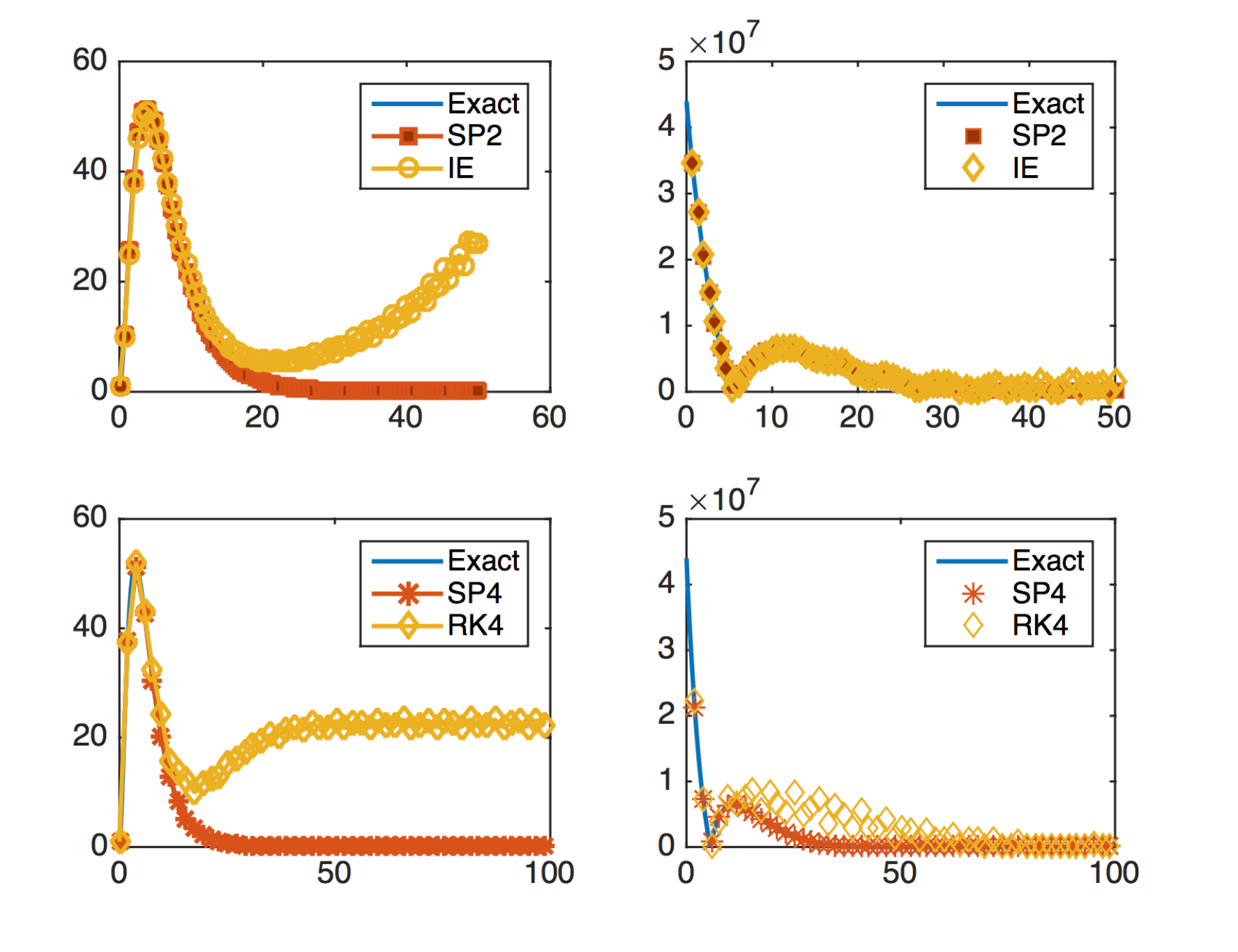}
\caption{Time versus the scaled energy $H(y_n)/H_0$ (left) and time versus the norm of the scaled rotational control $\left\|T^{-1}\boldsymbol{\tau}_r\right\|_2$ (right). The two top figures compare the methods of order 2: SP2 and Improved Euler, for time $t \in [0,50]$ and step size $h=2/3$. The two bottom figures compare methods of order $4$: SP4 and RK4 with step size $h = 1.9375$ on the time interval $[0,100]$. All methods are compared with the exact solution. The splitting methods follow the exact scaled energy and torque well. On the contrary, the values from the RK4 and of the Improved Euler method are seen to differ.  \label{EnErrorEvo}}
\end{figure}

%In Table \ref{GEcomp} we compare the relative global errors for the same methods at the endpoint of a simulation, $t \in [0,780]$. Note first that RK4 is close to its convergence limit for $h=0.005$, which serves to justify the use of RK4 with $h=0.005$ as the exact solution for the longer simulation. All methods are seen to have the expected order. Similarly to the results in Table \ref{enComp}, the splitting methods give lower errors, and are stable for larger step sizes than the comparison RK-method of the same order. 

\section{Concluding remarks}
\label{conclusion}
We have proposed a class of splitting schemes for a system of controlled vessel rigid body equations in use in offshore marine operations. %The vessel equations are input-output passive and the methods preserve this property under numerical time-discretization. %are specifically tailored for this problem, i.e. for a rigid body problem including rotational and translational motion in presence of external forces, damping and controls.  %Something about these equations not being trivial to formulate.
The discrete systems respect important qualitative features of the continuous system: we have proved that the numerical flow obtained by the splitting methods is input-output passive, and the numerical attitude  evolves on the Lie group $SO(3)$ as its continuous counterpart.
The benefit of the passivity-preserving schemes is that they satisfy a definition of passivity analogous to the one satisfied by the continuous equations. The control of the continuous system is designed so to guarantee the attainment of a certain desired equilibrium, but ultimately the systems are discretized with numerical integration techniques. The passivity preserving schemes ensure this same property at the discrete level by construction of the numerical methods.

With the proposed splitting schemes we achieve improved accuracy in the numerical solution. In our numerical experiments the methods show better energy behaviour compared to classical Runge-Kutta techniques.

The splitting methods considered have not been optimised with respect to global error, number of stages, i.e. elementary flows composed, and computational effort. %A proper study of passivity preserving splitting methods of order higher than $2$ using complex coefficients with positive real part \cite{blanesComplexcoeff} will be subject of future investigations. 
This is future work. A reformulation of the input forces using only the attitude  and the angular velocity, and a general improvement of the controlled system will  also be considered in the future. 
%Something about passivity. %The splitting schemes create a powerful platform for improving existing numerical integration methods with respect to passivity/energy preservation.

\appendix
\numberwithin{equation}{section}
\section{Euler parameters} \label{EulerPar}

We here review briefly the main properties of quaternions and Euler parameters. More information on this subject can be found in e.g. \cite{marsden99IMS}. The set of quaternions,
    \[
    \mathbb{H}:=\{q=(q_0,\mathbf{q})\in \mathbb{R}\times \mathbb{R}^3, \, \mathbf{q}=[q_1,q_2,q_3]^T 		\}\cong \mathbb{R}^4,
    \]
    is a strictly skew field \cite{baker02mg}.
    Addition and multiplication of two quaternions, $p=(p_0,\mathbf{p}),\,q=(q_0,\mathbf{q})\in \mathbb{H}$, are defined by
    \[
    p+q:=(p_0+q_0,\mathbf{p}+\mathbf{q}),
    \]
    and
    \begin{equation}
    \label{mult_q}
        pq:=(p_0q_0-\mathbf{p}^T \mathbf{q},p_0\mathbf{q}+q_0\mathbf{p}+\mathbf{p}\times \mathbf{q}).
    \end{equation}
For $q\neq (0,\mathbf{0})$ there exists an inverse
\[
    q^{-1}:=q^c/\| q\|^2,\quad
    \|q\| := \sqrt{q_0^2 + \|\mathbf{q}\|_2^2},
\]
where $q^c:=(q_0,-\mathbf{q})$ is the conjugate of $q$, such that
$qq^{-1}=q^{-1}q= e=(1,\mathbf{0})$.
In the sequel we will consider $q\in \mathbb{H}$ as a
vector $q=[q_0,q_1,q_2,q_3]^T\in \mathbb{R}^4$. The
multiplication rule (\ref{mult_q}) can then be expressed by means of
a matrix-vector product in $\mathbb{R}^4$. Namely,
    $pq=L(p)q=R(q)p$, where
    \begin{equation*}
    \label{LeftRightMult}
    L(p):=\left[
    \begin{array}{cc}
        p_0&-\mathbf{p}^T\\
        \mathbf{p} & p_0I+\widehat{\mathbf{p}}
    \end{array}\right], \quad
    R(q):=\left[
    \begin{array}{cc}
        q_0&-\mathbf{q}^T\\
        \mathbf{q} & q_0I-\widehat{\mathbf{q}}
    \end{array}\right],
    \end{equation*}
    and $I$ is the $3\times 3$ identity matrix.
        Note that $R(q)$ and $L(p)$ commute, i.e. $R(q)L(p)=L(p)R(q)$.

Three-dimensional rotations in space can be represented by Euler parameters, i.e. unit
quaternions
\[
\mathbb{S}^3:=\{q \in \mathbb{H}\, | \, \|q\|=1 \}.
\]
Equipped with the quaternion product, $\mathbb{S}^3$ is a Lie group, with $q^{-1}=q^c$ and $e = (1,\mathbf{0})$ as the identity element.
There exists a (surjective $2:1$) group homomorphism (the Euler-Rodriguez map) $\mathcal{E}:\mathbb{S}^3 \to
SO(3)$, defined by
    \[
    \mathcal{E}(q) := I +2q_0\widehat{\mathbf{q}}+2\widehat{\mathbf{q}}^2.
    \]
The Euler-Rodriguez map can be explicitly written as
    \begin{equation*}
    \label{q_map_SO3}
    \mathcal{E}(q) = \left[
            \begin{array}{ccc}
            1-2(q_2^2+q_3^2)& 2(q_1q_2-q_0q_3)& 2(q_0q_2+q_1q_3)\\
            2(q_0q_3+q_1q_2)&1-2(q_1^2+q_3^2)&2(q_2q_3-q_0q_1)\\
            2(q_1q_3-q_0q_2)&2(q_0q_1+q_2q_3)&1-2(q_1^2+q_2^2)
            \end{array}
            \right].
    \end{equation*}
A rotation in
$\mathbb{R}^3$,
    \[
    \mathbf{w}=Q\mathbf{u},\quad Q\in SO(3),
    \quad \mathbf{u},\mathbf{w} \in \mathbb{R}^3,
    \]
    can, for some $q\in \mathbb{S}^3$, be expressed in quaternionic form as
    \begin{equation*}
    \label{rot_in_q}
  w=L(q)R(q^c) u=
    R(q^c)L(q) u,
    \quad u=(0,\mathbf{u}),\ w=(0,\mathbf{w})\in \mathbb{H}_{\mathcal{P}},
    \end{equation*}
where $\mathbb{H}_\mathcal{P}:=\{q\in \mathbb{H}\, | \,
q_0=0 \}\cong \mathbb{R}^3$ is the set of so called pure
quaternions.

%%%%%%%%%%%%%%%%%%%%%%%%%%%%%%%%%%%%%%%%%%%%%%%%%%%%%%%%%%%%%%%%%%%%%%%%%%%%%%%%%%%%%%%%%%%%%%%
\subsection{The Lie algebra $\mathfrak{s}^3$}
If $q\in \mathbb{S}^3$, it follows from
$qq^c=e$ that
\[
    \mathfrak{s}^3:=T_{e}\mathbb{S}^3= \mathbb{H}_{\mathcal{P}}.
\]
The Lie algebra $\mathfrak{s}^3$, associated with $\mathbb{S}^3$, is equipped
with a Lie bracket $[\, \cdot\, ,\cdot \,]_{\mathfrak{s}}:
\mathfrak{s}^3\times \mathfrak{s}^3 \to \mathfrak{s}^3$,
\[
    [\,u\, , w\,]_\mathfrak{s}:=
     L(u)w-L(w)u=(0,2 \mathbf{u}\times\mathbf{w}),
\]
where $u=(0,\mathbf{u}),\, w=(0,\mathbf{w})$.

The derivative map of $\mathcal{E}$ is %a Lie algebra isomorphism
$\mathcal{E}_{*} =T_{e}\mathcal{E}:\mathfrak{s}^3\to \mathfrak{so}(3)$. This map, given by
\begin{equation*}
\label{Map_s3_so3}
    \mathcal{E}_*(u)=2\widehat{\mathbf{u}},\quad u=(0,\mathbf{u}),
\end{equation*}
is a Lie algebra isomorphism.
Assume now that $q\in \mathbb{S}^3$
is such that
$\mathcal{E}(q(t))=Q(t)$, then
$L(q^c)\dot{q}\in \mathfrak{s}^3$, $Q^T\dot{Q}%\boldsymbol{\Lambda}^T \dot{\boldsymbol{\Lambda}}
\in \mathfrak{so}(3)$ and
\begin{equation}
\label{iso_dE_2}
  \mathcal{E}_{*}(L(q^{c})\dot{q}) = Q^T\dot{Q}. %\boldsymbol{\Lambda}^T \dot{\boldsymbol{\Lambda}}.
\end{equation}
Furthermore, it can be shown that
\begin{equation}
\label{eq_iso_2}
   \mathcal{E}_{*}(L(q)R(q^{c}) u ) =
   2 \widehat{\mathcal{E}(q)\mathbf{u}}
   \qquad \forall \, q\in\mathbb{S}^3 ,\, u=(0,\mathbf{u}) \in \mathfrak{s}^3.
\end{equation}
As a consequence of (\ref{iso_dE_2}) and (\ref{eq_iso_2}),
the kinematics of the attitude of the vessel \eqref{eq:attitudeSO3} can be expressed %cross sections (\ref{Rot_S_SO3}) and (\ref{Rot_t_SO3}) can be formulated
in Euler parameters in $\mathbb{S}^3$ as
\begin{equation}
\label{W_Om_in_q}
\dot{q} = \frac{1}{2}q\,\omega , \qquad \omega=(0,\boldsymbol{\omega}).
\end{equation}

%%%%%%%%%%%%%%%%%%%%%%%%%%%%%%%%%%%%%%%%%%%%%%%%%%%%%%%%%%%%%%%%%%%%%%%%%%%

\section{The equations using Euler parameters} \label{quatEq}

We rewrite the equations \eqref{eq:vessel}, \eqref{controls1}, \eqref{controls2}, \eqref{controls3}, using \eqref{W_Om_in_q} to represent the attitude with Euler parameters. Note that if  $q \in \mathbb{S}^3$ is known, we also know the Euler angles $\mbox{\boldmath $\theta$}$ from a transformation between
the two representations.

\begin{align*}
\label{eq:vessel}
\dot{\mathbf{p}} &= \mathbf{p}\times \mbox{\boldmath $\omega$} -\left(D_t\mathbf{v}+\mathcal{E}(q)^T\mathbf{g}_t^s-\mbox{\boldmath $\tau$}_t\right), \\
\dot{\mathbf{m}} &= \mathbf{m} \times \mbox{\boldmath $\omega$} + \mathbf{p}\times \mathbf{v}- \left(D_r\mbox{\boldmath $\omega$}+\mathcal{E}(q)^T\mathbf{g}_r^s-\mbox{\boldmath $\tau$}_r\right), \\
\dot{q} &= \frac{1}{2}q\,\omega , \qquad \omega=(0,\boldsymbol{\omega}),\\
\dot{\mathbf{x}} &= \mathcal{E}(q)\,\mathbf{v}, \\
\dot{\mbox{\boldmath $\varphi$}}_{\boldsymbol{\theta}}&=\tilde{\mbox{\boldmath $\theta$}}, \\
\dot{\mbox{\boldmath $\varphi$}}_{\mathbf{x}} &=\tilde{\mathbf{x}},
\end{align*}
with
\begin{align*}
\mbox{\boldmath $\tau$}_r  &= -\left(\Pi_e^{-1}\mathcal{E}(q)\right)^T(K_p^{r}\tilde{\mbox{\boldmath $\theta$}}
+ K_d^{r}\dot{\tilde{\mbox{\boldmath $\theta$}}} + K_i^{r} \mbox{\boldmath $\varphi$}_{\boldsymbol{\theta}}), \\
%\int_{t_0}^t \! \tilde{\mbox{\boldmath $\theta$}}(\sigma) \,d\sigma), 
\boldsymbol{\tau}_t &= -\mathcal{E}(q)^T \left(K_p^{t}\tilde{\mathbf{x}}
+ K_d^{t}\dot{\tilde{\mathbf{x}}} + K_i^{t} \mbox{\boldmath $\varphi$}_{\mathbf{x}}\right),\\
\tilde{\mbox{\boldmath $\theta$}} &= \mbox{\boldmath $\theta$}- \mbox{\boldmath $\theta$}_{ref}, %= J(\mbox{\boldmath $\eta$})
%\mbox{\boldmath $\nu$} \hspace{3mm} \implies 
\hspace{6mm}
\dot{\tilde{\mbox{\boldmath $\theta$}}} = \dot{\mbox{\boldmath $\theta$}} = \Pi_e^{-1}\mathcal{E}(q)\, \mbox{\boldmath $\omega$} ,
\hspace{6mm}
\tilde{\mathbf{x}} = \mathbf{x}-\mathbf{x}_{ref}.
\end{align*}
  %This type of design of the control forces based on Euler angles introduces in the vector field of the equations undesirable singularities, a possible way to avoid this inconvenience is to design $\mbox{\boldmath $\tau$}$ using  directly $\mbox{\boldmath $\omega$}$ and $Q$ rather than  $\mbox{\boldmath $\theta$}$.

%%%%%%%%%%%%%%%%%%%%%%%%%%%%%%%%%%%%%%%%%%%%%%%%%%%%%%%%%%%%%%%%%%%%%%%%%%%%%%%%%%%%%%%%%%%%%%%%%%%%%%
\section{Splitting coefficients } \label{SplittingCoef}

The coefficients of a 4th order splitting scheme in the format (\ref{splittinggeneral}) are

{\scriptsize
\begin{center}
\begin{tabular}{lcl}
$a_1 = 0.0792036964311956500000000000000000000000$, && $b_1 = 0.209515106613361881525060713987$, \\
$a_2 = 0.353172906049773728818833445330$, && $b_2 = -0.14385177317981800000000000000000000$, \\
$a_3 = -0.042065080357719520000000000000000000000$, && $b_3 = \frac{1}{2}-(b_1+b_2)$,  \\
$a_4 = 1-2(a_1+a_2+a_3)$. && \\
\end{tabular}
\end{center}
}

The coefficients of a 6th order splitting scheme in the format (\ref{splittinggeneral}) are
{\small
\begin{center}
\begin{tabular}{lcl}
$a_1 = 0.0502627644003923808654389538920$, && $b_1 = 0.148816447901042828823498193483$, \\
$a_2 = 0.413514300428346618921141630839$, && $b_2 = -0.132385865767782744686048193902$, \\
$a_3 = 0.045079889794397660000000000000000000$, && $b_3 = 0.0673076046921849473963237618218$,\\
$a_4 = -0.188054853819571375656897886496$, && $b_4=0.432666402578172649872653897748$,\\
$a_5 = 0.541960678450781151905056284542$, &&$b_5=\frac{1}{2}-(b_1+b_2+b_3+b_4)$,\\
$a_6=1-2(a_1+a_2+a_3+a_4+a_5)$. && \\
\end{tabular}
\end{center}
}
We refer
to \cite{blanes} for an overview on splitting schemes. %other schemes which are also used in the numerical experiments.

%%%%%%%%%%%%%%%%%%%%%%%%%%%%%%%%%%%%%%%%%%%%%%%%%%%%%%%%%%%%%%%%%%%%%%%%%%%%%%%%%%%%%%%%%%%%%%%%%%%%%%
\section{Parameter values}\label{ParamValues}
The values of the parameters we use in the experiments are in SI units
\vspace{5mm}
\begin{center}
\begin{tabular}{lcl}
$D_{r1} = 9.329153987 \times 10^2$, &&  $D_{t1} = 3.53933789 \times 10^1$, \\
$D_{r2} =  6.514979127508227 \times 10^8$, && $D_{t2} = 1.1781388 \times 10^2$, \\
$D_{r3} =  3.15094664584 \times 10^4$,  && $D_{t3} = 1.4566249 \times 10^6$,  \\
$D_r = \mathrm{diag}(D_{r1},D_{r2},D_{r3})$, && $D_t = \mathrm{diag}(D_{t1},D_{t2},D_{t3})$, \\
$K_p^{r} = \mathrm{diag}(0,0,1\cdot 10^8)$, && $K_p^{t} = \mathrm{diag}(4\cdot 10^5,4\cdot 10^5,0)$,\\
$K_d^{r} = \mathrm{diag}(0,0,1\cdot 10^9)$, && $K_d^{t} = \mathrm{diag}(4\cdot 10^6,4\cdot 10^6,0)$,\\
$K_i^{r} = \mathrm{diag}(0,0,2\cdot 10^5)$,  && $K_i^{t} = \mathrm{diag}(1\cdot 10^3,1\cdot 10^3,0)$, \\
$T_1 = 2.873071 \times 10^8 $, && $\mbox{\boldmath $\theta$}_{0} = [0.05,-0.02,0.10]^T$, \\
$T_2 = 2.726143\times 10^9 $, &&  $\mbox{\boldmath $\theta$}_{ref} = \left[0,0,0.54\right]$,\\
$T_3 = 2.90000 \times 10^9 $, &&  $\mathbf{x}_0 = [723,0,0]^T$,  \\
$T =\mathrm{diag}(T_1,T_2,T_3)$, && $\mathbf{x}_{ref} = \left[780,20,0\right]$,\\
$\overline{GM}_T = 2.1440 $, && $m_v = 6.3622085\times 10^6$, \\
$\overline{GM}_L = 103.628 $, &&  $A_{wp} = 1.3834\times 10^3 $, \\
$g = 9.81 $, && %$\mbox{\boldmath $\omega$}_0 =\left[0,0,0\right]^T$,  
\\
$\rho_w = 1.025 \times 10^3 $, && %$v_0 = [0,0,0]^T$, 
\\
$z_{eq} = 0$. && \\
\end{tabular}
\end{center}
Many of the values are taken from data for a supply vessel from \cite{mss}.

\clearpage
%\acks
This work has received funding from the European Unions Horizon 2020
research and innovation programme under the Marie Sklodowska-Curie grant
agreement No. 691070, and from The Research Council of Norway.
We are grateful to T. I. Fossen for useful discussions. We are also grateful to Sergio Blanes and Fernando Casas for useful discussions regarding splitting methods, and for providing highly accurate coefficients for the splitting methods of order 4 and 6 used in the numerical experiments. Part of this work was done while visiting Massey University, Palmerston North, New Zealand, and La Trobe University, Melbourne, Australia.

This is a pre-print of an article published in Multibody System Dynamics. The final authenticated version is available online at: https://doi.org/10.1007/s11044-018-9628-5.

\clearpage

\end{document}